\newtheorem{thmA}{Theorem}
\newtheorem{corA}[thmA]{Corollary}
\newtheorem{theorem}{Theorem}[section]
\newtheorem{proposition}[theorem]{Proposition}
\newtheorem{lemma}[theorem]{Lemma}
\newtheorem{corollary}[theorem] {Corollary}
\newtheorem{addendum}[theorem] {Addendum}
\newtheorem{question}[theorem]{Question}
\theoremstyle{remark}
\newtheorem{remark}[theorem]{Remark}
\newtheorem{remarks}[theorem]{Remarks}
\theoremstyle{definition}
\newtheorem{definition}[theorem]{Definition}
\def\Z{\mathbb Z}
\def\ns{\unlhd}
\def\ssm{\smallsetminus}
\newcommand{\core}{\operatorname{Core_{RF}}}
\newcommand{\coreRF}{\operatorname{Core_{RF}}}
\newcommand{\depth}{\operatorname{Depth_{RF}}} 
\newcommand{\depthRF}{\operatorname{Depth_{RF}}} 
\def\DL{\Delta}   
\def\Z{\mathbb Z}
\def\N{\mathbb N}
\def\G{\Gamma}
\def\GG{\mathbb G}
\def\La{\Lambda}
\def\-{\overline}
\def\wh{\widehat}
\def\T{\mathcal{T}}
\def\G{\Gamma}  
\def\g{\gamma}
\def\<{\langle}
\def\>{\rangle}
\numberwithin{equation}{section}
\begin{document}

%------
% Insert the title of your paper and (if necessary)
% a short title for the running head.
%------
\title{Ordinals arising as residual finiteness depths}
%\titlemark{SHORT TITLE FOR THE RUNNING HEAD}
 
% author  information
\author[Martin R. Bridson]{Martin R.~Bridson}
\address{Mathematical Institute\\
Andrew Wiles Building\\
Woodstock Road\\
Oxford, OX2 6GG}
\email{bridson@maths.ox.ac.uk}  
%------
\subjclass{20E26, 20E06, 20E08}

%------
% Add a list of keywords. Only capitalise a keyword if it starts with a proper name.
%------
\keywords{residual finiteness depth, graphs of groups}

%------
% Insert your abstract.
%------
\begin{abstract} For every natural number $n$, there exist finitely presented groups
with residual finiteness depths $\omega\cdot n$ and $\omega\cdot n + 1$. The ordinals that
arise as the residual finiteness depth of a finitely generated group (equivalently, a countable group) 
are $0,\, 1$, the countable limit ordinals, and the successors of these limit ordinals.
\end{abstract}

\maketitle

%------
% INSERT THE BODY OF THE PAPER HERE (except
% acknowledgments, funding info and bibliography)
%------

The {\em residual  
core} $\coreRF(G)$ of a group $G$ is the normal subgroup consisting of those elements of $G$ that have
trivial image in every finite quotient of $G$, i.e. the kernel of the natural 
map from $G$ to its profinite completion $\wh{G}$. If an infinite group $G$ is residually finite, then  
its {\em residual finiteness depth} is defined as $\depthRF(G) =\omega$, because the shortest chain of finite-index subgroups
interpolating between  $G$ and  $\{1\}$ has length $\omega$. If $G$ is not residually finite, then one can continue beyond 
$\omega$ by examining the finite quotients of  $\coreRF(G)$.
In \cite{BJ}, Brody and Jankiewicz define a countable group $G$ to have  residual finiteness depth 
$\omega\cdot n$ if there is a sequence of infinite groups $G=C_0>C_1>\dots >C_{n-1}$
such that $C_{n-1}$ is residually finite and $C_{i}=\coreRF(C_{i-1})$ for $i=1,\dots,n-1$.  
They also 
define what it means for $\depthRF(G)$ to be equal to other ordinals (see Section \ref{s:1}). For example, 
$\depthRF(G) = \omega\cdot n + 1$ if, instead of $C_{n-1}$ being residually finite, $\coreRF(C_{n-1})$ is a 
non-trivial finite group. And $\depthRF(G) = \omega^2$ if there is a sequence of infinite groups $G=C_0>C_1>\dots$
such that each $C_{i}=\coreRF(C_{i-1})$ is infinite but $\core^\omega(G):=\bigcap_i C_i =1$.
If $\core^\omega(G)$ is a non-trivial finite group, then $\depth(G) = \omega^2+1$. 

\smallskip

Using a wreath product construction, Brody and Jankiewicz  
proved \cite[Theorem 1.1]{BJ} that for every  integer $n>0$ there exist finitely generated groups $G_n$ with $\depth(G_n) =\omega\cdot n$. Their groups are not
finitely presented when $n\ge 2$ and they ask \cite[Question 5.3]{BJ} if there might exist an alternative construction
that gives {\em finitely presented} groups $\G_n$ with $\depthRF(\G_n) =\omega\cdot n$. 
The first purpose of this note is to describe such a construction. Brody and Jankiewicz
also ask \cite[Question 5.4]{BJ}  if there might exist 
groups $M_n$ with $\depth(M_n) =\omega\cdot n+1$.   

\begin{thmA}\label{t1}
For every  positive integer $n$, there exists a finitely presented group $\G_n$ with $\depthRF(\G_n) =\omega\cdot n$ and
a finitely presented group $M_n$ with $\depthRF(M_n) =\omega\cdot n +1$.
\end{thmA}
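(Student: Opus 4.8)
The plan is to reduce Theorem~\ref{t1} to a single ``core--promotion'' construction and then to iterate it from two base cases. Say that a finitely presented group $B$ is a \emph{promotion} of a finitely presented group $A$ if $A$ is a normal subgroup of $B$, the quotient $B/A$ is residually finite, and $\coreRF(B)=A$. For such a $B$, the chain of iterated residual cores is $B>A>\coreRF(A)>\cdots$, i.e.\ $B$ followed by the chain of $A$; so the chain of $B$ has exactly one term more than that of $A$, with the same terminal behaviour. In particular, if $\depthRF(A)=\omega\cdot(n-1)$ then $\depthRF(B)=\omega\cdot n$, and if $\depthRF(A)=\omega\cdot(n-1)+1$ then $\depthRF(B)=\omega\cdot n+1$. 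Hence it suffices to produce suitable base cases and to show that every finitely presented group (or at least every one occurring in the iteration) admits a promotion.

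\smallskip\noindent\emph{Base cases.} Take $\Gamma_1=\mathbb Z$, so $\depthRF(\Gamma_1)=\omega$. For $M_1$, use Deligne's example: for $g\ge 2$ let $\widetilde{G}$ be the preimage of $\mathrm{Sp}_{2g}(\mathbb Z)$ in the universal cover of $\mathrm{Sp}_{2g}(\mathbb R)$, a finitely presented central extension $1\to\mathbb Z\to\widetilde{G}\to\mathrm{Sp}_{2g}(\mathbb Z)\to 1$ whose central $\mathbb Z$ is generated by $z$. Deligne proved that $\coreRF(\widetilde{G})$ is the index-two subgroup $\langle z^{2}\rangle$; thus $\widetilde{G}$ is already a promotion of $\mathbb Z$ (in particular $\depthRF(\widetilde{G})=\omega\cdot 2$), and $M_1:=\widetilde{G}/\langle z^{4}\rangle$ is finitely presented with $\coreRF(M_1)=\langle z^{2}\rangle/\langle z^{4}\rangle\cong\mathbb Z/2$ and $M_1/\coreRF(M_1)\cong\widetilde{G}/\langle z^{2}\rangle$ residually finite, so $\depthRF(M_1)=\omega+1$.

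\smallskip\noindent\emph{The promotion.} A promotion of $A$ can never be a semidirect product $A\rtimes Q$: whenever $\coreRF(A)\ne A$, the group $A$ has a proper finite-index characteristic subgroup $L$, which is then normal in $A\rtimes Q$, and $(A/L)\rtimes Q$ admits a finite quotient on which $A/L$ survives, contradicting $\coreRF(A\rtimes Q)=A$. So the extension $1\to A\to B\to B/A\to 1$ must be non-split and ``profinitely rigid relative to~$A$''. I would construct $B$ as the fundamental group of a finite graph of groups with finitely presented vertex and edge groups — so $B$ is finitely presented — with one vertex group built from $A$ and another an arithmetic group enjoying the congruence subgroup property, and with the edge identifications chosen so that: (a)~$A$ is compatible with the splitting, so $B/A$ is again the fundamental group of a finite graph of residually finite groups, to which standard subgroup-separability results apply, giving residual finiteness of $B/A$; and (b)~the congruence rigidity of the arithmetic vertex group, propagated across the edges in the manner of Deligne's argument, forces every homomorphism from $B$ to a finite group to be trivial on $A$. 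Granting such a promotion $B(\,\cdot\,)$, set $\Gamma_n:=B^{\,n-1}(\mathbb Z)$ and $M_n:=B^{\,n-1}(M_1)$ (with $B^{0}=\mathrm{id}$); the chain computation above then gives $\depthRF(\Gamma_n)=\omega\cdot n$ and $\depthRF(M_n)=\omega\cdot n+1$.

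\smallskip\noindent\emph{The main obstacle.} Everything rests on the identity $\coreRF(B)=A$. The inclusion $\coreRF(B)\subseteq A$, i.e.\ residual finiteness of $B/A$, should be routine from the structure of the graph of groups. The reverse inclusion $A\subseteq\coreRF(B)$ — that no finite quotient of $B$ is nontrivial on $A$ — is the hard part and carries essentially all the content: one must show that for every finite-index normal subgroup $L$ of $A$ that is normalised by $B$, the group $B/L$ has no finite quotient on which $A/L$ survives, and this is precisely where a Deligne-type congruence phenomenon has to be engineered inside the graph of groups. One must also verify that finite presentability really is preserved under iteration and — since $\coreRF(B)$ is to equal $A$ exactly, rather than, say, a central extension of it — that the remainder of the core chain is unaltered, so that the depth lands on $\omega\cdot n$ (respectively $\omega\cdot n+1$) and not on a larger ordinal.
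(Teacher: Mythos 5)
Your reduction of the theorem to an iterated ``promotion'' is logically sound, and your base cases are correct ($\Gamma_1=\mathbb Z$, and $M_1$ a finite cyclic central quotient of Deligne's extension, with $\depthRF(M_1)=\omega+1$). But the entire content of the theorem is concentrated in the step you do not carry out: producing, for each group $A$ arising in the iteration, a finitely presented $B$ with $A\unlhd B$, $B/A$ residually finite, and $\coreRF(B)=A$ \emph{exactly}. You describe this only as a wish list (``edge identifications chosen so that\dots''), and you yourself flag it as ``the main obstacle'' carrying ``essentially all the content.'' That is a genuine gap, not a deferrable technicality. Worse, the specific shape you ask for is very hard to achieve and is not what the paper's construction delivers: if $A$ sits inside a vertex group of a graph of groups $B$, then $A$ is typically not normal in $B$, and any normal subgroup of $B$ containing $A$ must contain all conjugates of $A$ and the Bass--Serre structure they generate. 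So $\coreRF(B)$ will not be $A$; it will be the fundamental group of an infinite tree of groups whose vertex groups include infinitely many conjugates of $A$ (or of subgroups of $A$). Your own observation that a promotion can never be a split extension already signals how restrictive the condition $\coreRF(B)=A$ is.

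The paper's proof sidesteps this entirely: it amalgamates $\Gamma_n$ with copies of Deligne's group $\Lambda$ along cyclic subgroups $\langle a_i\rangle$ identified with generators of $\coreRF(\Lambda)\cong\mathbb Z$, so that every finite quotient kills all the $a_i$ and hence kills $\Gamma_n$. It then \emph{accepts} that the resulting core is a large tree of groups rather than a single copy of $\Gamma_n$, and instead develops tools (the retraction condition (3) giving a map $h_n$ onto $\mathbb Z$, Lemma \ref{l:tree}, and the free-product Lemma \ref{l:free-prod}) to show that the second core is a free product of copies of $\coreRF(\Gamma_n)$ and a free group, whose depth is computable. If you want to rescue your approach, you would either need to prove existence of promotions in your strict sense (which I do not believe follows from congruence-subgroup-type rigidity in any known way), or relax the requirement $\coreRF(B)=A$ to $\coreRF(B)$ being a group of the same depth as $A$ --- at which point you need precisely the free-product depth calculus that the paper builds, and the argument becomes the paper's argument.
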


The groups in Theorem \ref{t1} will be constructed in an inductive manner as amalgamated free products; the
proof relies on Bass-Serre theory.  
The construction is quite flexible, allowing one to impose extra conditions on the groups
constructed; for example, one can require the $\G_n$ to have finite classifying spaces. But  in an effort
 to make the ideas as clear as possible, I shall concentrate
on specific families  of examples, leaving the reader to consider variations. 

A third question of Brody and Jankiewicz  \cite[Question 5.5]{BJ}  concerns the existence of groups whose
depths are  $\omega^n$ and other ordinals.
They also ask about the difference between finitely generated and finitely presented groups in this context.

The following theorem gives a complete classification of the ordinals that arise as the residual finiteness
depths of finitely generated groups. We shall see in Section \ref{s:3} that the answer is the same
for countable groups.

\begin{thmA}\label{t3} 
Let $\alpha$ be an ordinal. There exists  a finitely generated group $J_\alpha$ for which $\depthRF(J_\alpha) =\alpha$
if and only if $\alpha$ is $0,\ 1$, a countable  limit ordinal, or the successor of a countable limit ordinal.  
\end{thmA}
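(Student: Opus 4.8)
The plan is to prove the two implications separately. Throughout, write $\core^0(G)=G$, $\core^{\beta+1}(G)=\coreRF(\core^\beta(G))$ and $\core^\lambda(G)=\bigcap_{\beta<\lambda}\core^\beta(G)$ for limit $\lambda$ --- the transfinite iterated residual core underlying the definition of $\depthRF$ recalled in Section \ref{s:1}.

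\emph{Necessity.} Suppose $\depthRF(J)=\alpha$ for a countable group $J$ (in particular for a finitely generated one). The chain $(\core^\beta(J))_\beta$ descends and becomes constant as soon as two consecutive terms agree: if $K=\coreRF(K)$ then $\core^\gamma(J)=K$ at every later $\gamma$, the limit stages being intersections of eventually constant families. Hence the chain strictly decreases up to its stabilisation ordinal $\k$; choosing $g_\beta\in\core^\beta(J)\setminus\core^{\beta+1}(J)$ for $\beta<\k$ injects $\k$ into $J$, so $|\k|\le\aleph_0$ and $\k<\omega_1$. Now unwind the definition. If $J$ is finite then $\alpha\in\{0,1\}$. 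If $J$ is infinite, let $\mu\le\k$ be the least ordinal with $\core^\mu(J)$ finite; then $1\le\mu<\omega_1$, and by the form of the definition $\alpha=\omega\cdot\mu$ if $\core^\mu(J)=\{1\}$ and $\alpha=\omega\cdot\mu+1$ if $\core^\mu(J)$ is a non-trivial finite group. By ordinal division every countable limit ordinal is $\omega\cdot\mu$ for a unique countable $\mu\ge1$, so $\alpha$ lies in the asserted list. No use is made of finite generation, so the same restriction holds for all countable groups --- the uniformity claimed for Section \ref{s:3}.

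\emph{Sufficiency.} Set $J_0=\{1\}$ and $J_1=\Z/2$; it remains to realise every countable limit ordinal and its successor. It suffices to construct, for each countable $\mu\ge1$, finitely generated groups $\G_\mu$ and $M_\mu$ whose iterated residual core is infinite at every stage $\beta<\mu$, is trivial at stage $\mu$ for $\G_\mu$, and is a fixed non-trivial finite group at stage $\mu$ for $M_\mu$; then $\depthRF(\G_\mu)=\omega\cdot\mu$ and $\depthRF(M_\mu)=\omega\cdot\mu+1$. I proceed by transfinite induction on $\mu$. For $\mu$ finite this is Theorem \ref{t1}, whose groups are built by an inductive amalgamation procedure. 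The successor step $\mu\to\mu+1$ is that same procedure applied once more: amalgamating the level-$\mu$ group, over a suitable edge subgroup, with an auxiliary group of small depth inserts --- via the Bass--Serre analysis of the finite quotients of the amalgam --- one extra stage into the iterated-core tower at level $\mu$: an infinite residually finite stage to produce $\G_{\mu+1}$, or a finite stage to produce $M_{\mu+1}$ (and likewise to ``cap'' a $\G$-type group into an $M$-type group). The analysis is insensitive to whether $\mu$ is finite.

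\emph{The limit step and the main obstacle.} The new ingredient is the limit step. Fix a countable limit ordinal $\mu$ and, using that it has cofinality $\omega$, pick $\mu_n\nearrow\mu$ with $\mu_n<\mu$ and the $\G_{\mu_n}$ already built. The restricted direct sum $D=\bigoplus_n\G_{\mu_n}$ has depth exactly $\omega\cdot\mu$: since $\coreRF$ commutes with restricted direct sums --- an element has finite support, so it survives in a finite quotient of $D$ iff it survives in a finite quotient of one coordinate --- transfinite induction gives $\core^\beta(D)=\bigoplus_n\core^\beta(\G_{\mu_n})$ for all $\beta$, which is infinite when $\beta<\mu$ (infinitely many summands, those with $\mu_n>\beta$, are infinite) and trivial when $\beta=\mu$ (each summand is already trivial at stage $\mu_n<\mu$). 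The trouble is that $D$ is not finitely generated. One must therefore realise a group with the same iterated-core behaviour as $D$ as a normal subgroup $N$ of a finitely generated group $\Gamma$ with $\Gamma/N$ residually finite and $\coreRF(\Gamma)\cong\bigoplus_n\coreRF(\G_{\mu_n})$; once that holds, $\core^\beta(\Gamma)\cong\bigoplus_n\core^\beta(\G_{\mu_n})$ for every $\beta\ge1$ (the iterated residual core of a group depends only on its isomorphism type), so $\Gamma$ serves as $\G_\mu$ and a final capping yields $M_\mu$. \textbf{The main difficulty is precisely the construction of such a finitely generated $\Gamma$:} it must fold the countably many non-isomorphic building blocks $\G_{\mu_n}$, of strictly increasing depth, into one finitely generated group without corrupting the transfinite residual core --- exactly the point where the flexibility of amalgams, HNN extensions and wreath products, together with a usable description of their finite quotients, must be exploited. (For the countable, not necessarily finitely generated, classification of Section \ref{s:3} no folding is needed: the groups $D$ are themselves countable groups realising the limit ordinals, and necessity was already proved for all countable groups.)
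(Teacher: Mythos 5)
Your necessity argument is correct and is essentially the paper's (Remark \ref{interpolate}(2) together with the observation following Definition \ref{defn:alpha rf} that any depth other than $0$ or $1$ is a limit ordinal or the successor of one). The sufficiency direction, however, has two genuine gaps, both at the transfinite stages. First, the successor step. You claim that the amalgamation procedure of Theorem \ref{t:2.2} ``applied once more'' passes from $\omega\cdot\mu$ to $\omega\cdot(\mu+1)$ and that ``the analysis is insensitive to whether $\mu$ is finite''. It is not: that procedure adds new core stages at the \emph{top} of the tower, producing a group $\G'$ with $\core^2(\G')$ a free product of copies of $\core(\G_\mu)$, hence $\depth(\G')=\omega\cdot 2+\depth(\core(\G_\mu))$. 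For finite $\mu=n$ this is $\omega\cdot 2+\omega\cdot(n-1)=\omega\cdot(n+1)$, but for $\mu\ge\omega$ one has $\depth(\core(\G_\mu))=\omega\cdot\mu$ and $\omega\cdot 2+\omega\cdot\mu=\omega\cdot\mu$: the prepended stages are absorbed and the depth does not increase. Realising $\omega\cdot\mu+\omega$ requires \emph{appending} $\omega$ at the deep end of the tower, which amalgamation at the top cannot do. The paper's device is the wreath product $A\wr B$ with $A$ perfect (Lemma \ref{l:wr}, Corollary \ref{c:wr}, Proposition \ref{p:wr}): since $\core^{\gamma}(A\wr B)=P_{A,B}\rtimes\core^{\gamma}(B)$, the base survives to level $\gamma$ and only then contributes, giving $\depth(A\wr B)=\depth(B)+\depth(A)$.

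Second, the limit step. You correctly reduce to realising the depth of $D=\bigoplus_n\G_{\mu_n}$ (or the corresponding free product) by a finitely generated group, but you explicitly leave this as ``the main difficulty'' rather than solving it, so the proof is incomplete exactly where the new work lies. You also demand more than is needed, namely exact control $\core(\Gamma)\cong\bigoplus_n\core(\G_{\mu_n})$. The paper's Proposition \ref{p:3gen} gets by with far less: for any countable $\G$ whose depth is a limit ordinal $\ge\omega^2$, a three-step construction (form $\G*\Z$; take an HNN extension whose stable letters $t_i$ conjugate the infinite-order elements $zg_i$ to a single one; amalgamate the free group on the $t_i$ with an infinite-rank subgroup of $F_2$) yields a $3$-generator group $\G^+\supseteq\G$ in which each layer is shown, via Bass--Serre theory, to prepend at most $\omega$ to the depth. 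The sandwich $\depth(\G)\le\depth(\G^+)\le\omega+\depth(\G)=\depth(\G)$, using the absorption $\omega+\alpha=\alpha$ for $\alpha\ge\omega^2$, then forces equality. This absorption trick --- an upper bound on the depth of the overgroup rather than exact control of its core --- is the missing idea. Finally, your ``capping'' to obtain $\omega\cdot\mu+1$ is asserted without explaining why the added finite group survives down to level $\mu$ instead of dying at level $1$; the paper arranges this by making it \emph{central} and of prime order (Theorems \ref{t3+1} and \ref{t:+1}), and the finite-generation upgrade must then be redone compatibly with that centre, which is the content of Section \ref{s:6}.
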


There are uncountably many countable limit ordinals but only countably many finitely presented groups, so we obtain the following
consequence.

\begin{corA}
There are uncountably many ordinals that arise as the residual finiteness depth of a finitely generated group but not as
the  residual finiteness depth of a finitely presented group. 
\end{corA}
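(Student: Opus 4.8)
The Corollary is a formal consequence of Theorem~\ref{t3} together with two elementary counting observations, so the plan is simply to assemble these. Write $\mathcal D_{\mathrm{fg}}$ for the class of ordinals realised as $\depthRF(G)$ for some finitely generated group $G$, and $\mathcal D_{\mathrm{fp}}\subseteq\mathcal D_{\mathrm{fg}}$ for the sub-class realised by finitely presented groups. By Theorem~\ref{t3}, $\mathcal D_{\mathrm{fg}}$ consists of $0$, $1$, the countable limit ordinals, and the successors of the countable limit ordinals; in particular it contains every countable limit ordinal.

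The first thing to record is that there are uncountably many countable limit ordinals: these are exactly the limit ordinals below $\omega_1$, and the assignment $\alpha\mapsto\omega\cdot(1+\alpha)$ maps the uncountable set $\omega_1$ injectively into them (each value is a countable limit ordinal, and left multiplication by $\omega$ is strictly increasing). Hence $\mathcal D_{\mathrm{fg}}$ has cardinality $\aleph_1$. The second is that $\mathcal D_{\mathrm{fp}}$ is countable: every finitely presented group is isomorphic to one given by a finite presentation $\langle x_1,\dots,x_k\mid r_1,\dots,r_m\rangle$, there are only countably many such presentations, and $\depthRF$ is an isomorphism invariant, so $\mathcal D_{\mathrm{fp}}$ is the image of a countable set and is therefore countable.

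Combining the two, $\mathcal D_{\mathrm{fg}}\setminus\mathcal D_{\mathrm{fp}}$ is an uncountable set with a countable subset removed, hence itself uncountable; this is precisely the Corollary. There is no genuine obstacle in the argument --- all of the mathematical content is carried by Theorem~\ref{t3} --- and the only point one should not take for granted is the set-theoretic fact that uncountably many countable ordinals are limit ordinals, which is already witnessed by the injection exhibited above.
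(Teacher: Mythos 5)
Your argument is exactly the paper's: the corollary is deduced from Theorem~\ref{t3} by noting that there are uncountably many countable limit ordinals (all realised by finitely generated groups) while only countably many finitely presented groups exist up to isomorphism, so only countably many ordinals can be realised by them. The proposal is correct and matches the paper's one-line justification, merely spelling out the cardinality details.
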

 
For ordinals $\alpha\ge \omega^2$, none of 
the groups constructed in the proof of Theorem \ref{t3} are  finitely presented, and for the moment 
I do not know how to realise any of these ordinals   as
the residual finiteness depth of a finitely presented group.

This paper is organised as follows. In Section \ref{s:1} we gather basic definitions, relate depth to cores
for countable ordinals, and prove some elementary lemmas describing the interplay of  these notions with Bass-Serre theory.
In Section \ref{s:2} we prove that every limit ordinal less than $\omega^2$ arises
as the residual finiteness depth of a finitely presented group.  In Section \ref{s:3} we prove that every countable limit ordinal
arises as the residual finiteness depth 
of a countable group, and deduce some related facts concerning successor ordinals; the 
constructions here involve a combination of wreath products and free products. In Section \ref{s:4}
we embed the countable groups of the previous section into finitely generated groups in a careful manner, enabling
us to show that every countable limit ordinal is the residual finiteness depth of a finitely generated group.
In Section \ref{s:5} we complete the proof of Theorem \ref{t1} by dealing with successor ordinals and in Section \ref{s:6}
we complete the proof of Theorem \ref{t3} by combining ideas from previous sections. 

\section{$\depth$ and residual cores}\label{s:1}

Brody and Jankiewicz \cite{BJ} introduce and explore extensions of the following  notion of {\em residual finiteness depth};
for our purposes this simplified definition will suffice. 
A similar notion is implicit in \cite{baum}.
 
\begin{definition}\label{defn:alpha rf}
Let $\alpha$ be an ordinal. A group $G$ is  \emph{$\alpha$-residually finite} if there exist 
subgroups $\{G_\beta\}_{\beta\leq\alpha}$ of  $G$ so that 
    \begin{itemize}%[label=(\roman*)]
\item[•] $G_0= G$ and $G_\alpha=\{1\}$,
\item[•] $G_\gamma \le  G_\beta$ if $\gamma > \beta$,
\item[•] $[G_{\beta} \colon G_{\beta+1} ]$ is finite for all $\beta<\alpha$,
\item[•] $G_\lambda =\bigcap_{\beta<\lambda} G_\beta$ for all limit ordinals $\lambda \leq \alpha$.
    \end{itemize}
If $\alpha$ is the least ordinal such that $G$ is $\alpha$-residually finite, then $\depth(G):=\alpha$.
\end{definition}
 
Note that there are finitely generated groups which are not $\alpha$-residually finite for any $\alpha$, for example 
groups that have no non-trivial finite quotients. Note too that if $\depth(G)$ is not $0$ (the trivial group) or $1$ (finite groups), then it is either a limit ordinal or the successor of a limit ordinal: if $G$ is $(\alpha + n)$-residually finite, with $n$ finite, 
then in a filtration $\{G_\beta\}$ witnessing this, one can replace  $G_\alpha \ge G_{\alpha + 1} \ge \dots \ge G_{\alpha+n}$
by $G_\alpha \ge G_{\alpha+n}$ to see that  $\depth(G)\le \alpha + 1$. 

In the introduction we defined $\depth(G)$  in terms of cores, and it will be convenient to use the language of cores again in our proofs, so we make the following definition.

\begin{definition}\label{defn:core} Let  $G$ be a group. The  {\em residual  core} $\core(G)$ is
 the normal subgroup consisting of those elements of $G$ that have trivial image in every finite quotient of $G$.
Starting with $\core^0(G)=G$ and $\core^1(G)=\core(G)$, for successor ordinals (e.g. natural numbers) we define $\core^{\alpha + 1}(G):=\core(\core^\alpha(G))$, 
 and for limit ordinals 
$\core^\alpha(G) := \bigcap\{\core^\beta(G)\mid \beta<\alpha\}$.
\end{definition}

\begin{remarks}\label{interpolate}$\ $

(1) If $H$ is a countable group, then one can construct a sequence of finite-index subgroups
$H=H_0>H_1>\dots$ with $\bigcap_i H_i = \core(H)$ by choosing coset representatives $\{c_0=1, c_1, c_2,\dots\}$
for $\core(H)$ in $H$ and defining $H_i$ to be the intersection of $H_{i-1}$ with
the kernel of a map from $H$ to a finite group that maps $c_i$ non-trivially. 

(2) If $G$ is countable and $\depth(G)=\alpha$ then $\alpha$ is countable: if  $\alpha$ were not countable,
then by choosing an element $g_\beta\in \core^\beta(G)\ssm  \core^{\beta+1}(G)$ for each countable ordinal
$\beta$ we would obtain an uncountable subset of $G$.
\end{remarks}

The reader will recall that all limit ordinals have the form $\omega\cdot\beta$. (Throughout, it is assumed that 
the reader is familiar with elementary facts about ordinals and their arithmetic, as found in Chapter 8 of \cite{enderton} for example.)

\begin{lemma}\label{l:cores-to-depth} Let $G$ be a countable group and let $\alpha$ and $\beta$ be ordinals.
	\begin{enumerate} 
		\item  $\core^\beta(G)=1$ if and only if $G$ is $(\omega\cdot \beta)$-residually finite.
		\item  When $\core^\beta(G)$ is non-trivial, it is finite if and only if $G$ is $(\omega\cdot \beta+ 1)$-residually finite. 
		\item $\depth(G)=\omega\cdot \alpha+1$ if and only if $\core^\alpha(G)$ is finite but non-trivial. 
		\item  $\depth(G)=\omega\cdot \alpha$ if and only if 
		$\core^\alpha(G)=1$ and $\alpha$ is the least ordinal such that $\core^\alpha(G)$ is finite. 
	\end{enumerate} 
\end{lemma}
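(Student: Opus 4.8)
The plan is to prove the four statements together by relating the core filtration to filtrations witnessing $\alpha$-residual finiteness, using Remark 1.4(1) as the basic interpolation tool. First I would establish (1). For the forward direction, suppose $\core^\beta(G) = 1$. I would build a filtration $\{G_\gamma\}_{\gamma \le \omega\cdot\beta}$ by transfinite recursion: for each $\delta < \beta$, having arranged that $G_{\omega\cdot\delta} = \core^\delta(G)$, apply Remark 1.4(1) to $H = \core^\delta(G)$ (which is countable, being a subgroup of $G$) to obtain a descending chain of finite-index subgroups from $\core^\delta(G)$ down to $\core^{\delta+1}(G) = \core(\core^\delta(G))$; reindex this chain by the ordinals in $[\omega\cdot\delta,\ \omega\cdot\delta + \omega) = [\omega\cdot\delta,\ \omega\cdot(\delta+1))$ so that its intersection $\core^{\delta+1}(G)$ sits at position $\omega\cdot(\delta+1)$. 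At limit ordinals $\lambda \le \beta$ one has $G_{\omega\cdot\lambda} = \bigcap_{\delta<\lambda} G_{\omega\cdot\delta} = \bigcap_{\delta<\lambda}\core^\delta(G) = \core^\lambda(G)$, so the intersection condition of Definition 1.2 is met (and for limit ordinals inside a block $[\omega\cdot\delta,\omega\cdot(\delta+1))$ it is met by construction, since the chains from Remark 1.4(1) are indexed by $\omega$). Then $G_{\omega\cdot\beta} = \core^\beta(G) = 1$, so $G$ is $(\omega\cdot\beta)$-residually finite. For the converse, suppose $\{G_\gamma\}_{\gamma\le\omega\cdot\beta}$ witnesses $(\omega\cdot\beta)$-residual finiteness; I claim $\core^\delta(G) \le G_{\omega\cdot\delta}$ for all $\delta \le \beta$, proved by transfinite induction. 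The successor step is the key point: if $\core^\delta(G) \le G_{\omega\cdot\delta}$, then every $G_{\omega\cdot\delta + n}$ for $n<\omega$ is a finite-index subgroup of $G_{\omega\cdot\delta}$, hence (intersecting with $\core^\delta(G)$) contains $\core(\core^\delta(G)) = \core^{\delta+1}(G)$, because the residual core of a group is contained in every finite-index subgroup; taking the intersection over $n$ gives $\core^{\delta+1}(G) \le G_{\omega\cdot(\delta+1)}$. Limit steps are immediate from the intersection conditions on both sides. Hence $\core^\beta(G) \le G_{\omega\cdot\beta} = 1$.

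For (2), the forward direction: if $\core^\beta(G)$ is finite and non-trivial, run the construction of (1) to get a filtration down to $G_{\omega\cdot\beta} = \core^\beta(G)$, then append one more term $G_{\omega\cdot\beta+1} = 1$; since $[\core^\beta(G):1]$ is finite this exhibits $G$ as $(\omega\cdot\beta+1)$-residually finite. Conversely, if $G$ is $(\omega\cdot\beta+1)$-residually finite via $\{G_\gamma\}$, then by the inductive claim from (1) we have $\core^\beta(G)\le G_{\omega\cdot\beta}$, and $[G_{\omega\cdot\beta}:G_{\omega\cdot\beta+1}] = [G_{\omega\cdot\beta}:1]$ is finite, so $\core^\beta(G)$ is finite. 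Statements (3) and (4) then follow by combining (1) and (2) with the general observation, already recorded in the discussion after Definition 1.2, that a depth which is not $0$ or $1$ is either a limit ordinal $\omega\cdot\alpha$ or a successor $\omega\cdot\alpha+1$. Concretely, for (4): $\depth(G) = \omega\cdot\alpha$ means $G$ is $(\omega\cdot\alpha)$-residually finite but not $\gamma$-residually finite for any $\gamma < \omega\cdot\alpha$; by (1) the first condition says $\core^\alpha(G)=1$, and minimality, together with (1) and (2) applied to ordinals $\delta<\alpha$, translates into $\alpha$ being the least ordinal with $\core^\alpha(G)$ finite (note $\core^\delta(G)$ finite for $\delta<\alpha$ would force $\depth(G)\le\omega\cdot\delta+1<\omega\cdot\alpha$). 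For (3): by (2), $\core^\alpha(G)$ finite non-trivial gives $(\omega\cdot\alpha+1)$-residual finiteness, and it is not $(\omega\cdot\alpha)$-residually finite since $\core^\alpha(G)\ne 1$, nor $\gamma$-residually finite for $\gamma<\omega\cdot\alpha$ by the same minimality argument; conversely $\depth(G)=\omega\cdot\alpha+1$ rules out $\core^\alpha(G)=1$ (else depth $\le\omega\cdot\alpha$) while (2) forces it finite.

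The main obstacle I anticipate is purely bookkeeping: setting up the transfinite recursion in (1) so that the blocks $[\omega\cdot\delta,\omega\cdot(\delta+1))$ are glued correctly and all four clauses of Definition 1.2 — especially the intersection clause at every limit ordinal, both those of the form $\omega\cdot\lambda$ and those strictly inside a block — are verified without circularity. The genuinely substantive ingredients are light: the countability of subgroups of $G$ (so Remark 1.4(1) applies at each stage), and the elementary fact that $\core(H)$ lies in every finite-index subgroup of $H$ (indeed in the kernel of every homomorphism to a finite group), which drives the converse inductive claim. Once that claim — $\core^\delta(G)\le G_{\omega\cdot\delta}$ for any witnessing filtration — is in hand, parts (3) and (4) are formal consequences and need only careful attention to the definition of $\depth$ as a least ordinal.
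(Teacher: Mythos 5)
Your proposal is correct and follows essentially the same route as the paper: the forward directions are built by interpolating between $\core^\delta(G)$ and $\core^{\delta+1}(G)$ with finite-index subgroups via Remark \ref{interpolate}, the converses rest on the transfinite induction showing $\core^\delta(G)\le G_{\omega\cdot\delta}$ for any witnessing filtration, and (3)--(4) are deduced formally from (1)--(2). Your write-up is in fact somewhat more explicit than the paper's (which dismisses the passage to (2)--(4) as immediate), but there is no substantive difference.
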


\begin{proof} For (1), 
assuming $\core^\beta(G)$ to be trivial, 
we must define a family of subgroups $\{G_\gamma\}_{\gamma\le\omega\cdot\beta}$ showing that
$G$ is $\omega\cdot\beta$-residually finite. First, we use Remark \ref{interpolate} to construct a sequence of finite-index subgroups $H_i(0)$ in $G$ that intersect in $\core(G)$. Next, for limit ordinals $\omega\cdot\delta \le \omega\cdot\beta$, 
we define  $G_{\omega\cdot\delta} = \core^\delta(G)$. To finish, for successor ordinals we
use Remark \ref{interpolate} again to construct a sequence of finite-index subgroups $H_i(\delta)$ in $\core^\delta(G)$
that intersect in $\core^{\delta+1}(G)$ and define $G_{\omega\cdot\delta + i} = H_i(\delta)$. 

For the reverse implication in (1), we take
$\{G_\gamma\}_{\gamma\le\omega\cdot\beta}$ to be a set of subgroups witnessing the fact that 
$G$ is $\omega\cdot\beta$-residually finite. Then, since  $\core(K)\le \core(H)$ for all groups $K<H$,  
an obvious transfinite induction shows that  $\core^\delta(G) \le G_{\omega\cdot\delta}$ for all $\delta\le\beta$. 

The passage from (1) to (2) is trivial. Items (3) and (4) follow immediately from (1) and (2).
\end{proof}
  
\subsection{Cores, depth, and graphs of groups}

I shall assume that the reader is familiar with the rudiments of Bass-Serre theory \cite{trees}.
If $\G $ is the fundamental group of a graph of groups $\GG$, the universal cover $\widetilde{\GG}$ (in the
category of graphs of groups) is the Bass-Serre tree for this splitting. Associated to each subgroup $H<\G$ one
has the graph of groups $\widetilde{\GG}/\!/H$ covering $\GG$. We shall be particularly interested in the case 
where the underlying graph $|\widetilde{\GG}/\!/H|$ is a tree, which it will be if and only if $H$ is generated by 
elliptic elements, i.e. by $\bigcup (H\cap G_v^\gamma)$, where the union is taken over all conjugates of the vertex
groups $G_v$  of $\GG$. Given a normal subgroup $K\ns\G$ we also consider the graph of groups $\GG_K$ 
(thought of as ``$\GG \mod K$") that is obtained
from $\GG$ without changing the underlying graph by replacing each of local groups $G$  with $G/(G\cap K)$,
taking the induced inclusions of edge groups. 
The kernel of the natural epimorphism $\G=\pi_1\GG\to\pi_1\GG_K$ is the subgroup of $K$ generated by elliptic elements.
The quotient map $\G\to \G/K$ factors through $\G\to \pi_1\GG_K$ giving
a natural epimorphism $\pi_1\GG_K\to \G/K$ that is inclusion on the local groups; this epimorphism is not injective in general.

\begin{lemma}\label{l:tree}
Let $\GG$ be a tree of groups with $\G= \pi_1\GG$.
If all of the edge groups of $\GG$ lie in $\core(\G)$, then $|\widetilde{\GG}/\!/\core(\G)|$ is a tree.
\end{lemma}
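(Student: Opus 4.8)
The plan is to appeal to the criterion recorded just before the statement: $|\widetilde{\GG}/\!/H|$ is a tree exactly when $H$ is generated by elliptic elements, and the kernel of the natural epimorphism $\G=\pi_1\GG\to\pi_1\GG_K$ is precisely the subgroup of $K$ generated by elliptic elements. Taking $K=\core(\G)$, it therefore suffices to prove that the natural map $p\colon\G\to\pi_1\GG_{\core(\G)}$ has kernel equal to $\core(\G)$ itself. One inclusion, $\ker p\subseteq\core(\G)$, is immediate from the quoted description of the kernel, so the work lies in showing $\core(\G)\subseteq\ker p$.

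First I would identify $\pi_1\GG_{\core(\G)}$. By hypothesis every edge group $G_e$ of $\GG$ lies in $\core(\G)$, so the corresponding edge group of $\GG_{\core(\G)}$ is $G_e/(G_e\cap\core(\G))=1$. Since $\GG_{\core(\G)}$ has the same underlying graph as $\GG$, namely a tree, and all of its edge groups are now trivial, its fundamental group is the free product $\ast_v\bar G_v$ of the vertex quotients $\bar G_v:=G_v/(G_v\cap\core(\G))$.

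The heart of the matter is that each $\bar G_v$ is residually finite. This is a short separation argument: if $g\in G_v$ has nontrivial image in $\bar G_v$, then $g\notin\core(\G)$, so some finite quotient $\phi\colon\G\to F$ has $\phi(g)\neq1$; since $\core(\G)\subseteq\ker\phi$ we get $G_v\cap\core(\G)\subseteq\ker(\phi|_{G_v})$, so $\phi|_{G_v}$ factors through $\bar G_v$, giving a finite quotient of $\bar G_v$ that is nontrivial on the image of $g$. Invoking the classical fact that a free product of residually finite groups is residually finite, we conclude that $\pi_1\GG_{\core(\G)}=\ast_v\bar G_v$ is residually finite, i.e.\ $\core(\pi_1\GG_{\core(\G)})=1$.

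To finish, since $\ker p\subseteq\core(\G)$, every finite quotient of $\G$ kills $\ker p$ and so factors through $p$; hence the finite quotients of $\G$ are precisely the pullbacks along $p$ of the finite quotients of $\pi_1\GG_{\core(\G)}$, and therefore $\core(\G)=p^{-1}\!\big(\core(\pi_1\GG_{\core(\G)})\big)=p^{-1}(1)=\ker p$. Thus $\core(\G)$ is generated by elliptic elements, and the criterion yields that $|\widetilde{\GG}/\!/\core(\G)|$ is a tree. The only genuinely external ingredient is residual finiteness of free products; the step I would take most care over is the correspondence between finite quotients of $\G$ and of $\pi_1\GG_{\core(\G)}$, since that is exactly what licenses computing $\core(\G)$ as the $p$-preimage of $\core(\pi_1\GG_{\core(\G)})$.
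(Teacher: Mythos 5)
Your proof is correct and takes essentially the same approach as the paper: both identify $\pi_1\GG_{\core(\G)}$ as a free product of the residually finite vertex quotients (the edge groups having become trivial), deduce that this free product is residually finite, and conclude that $\core(\G)$ equals the kernel of $\G\to\pi_1\GG_{\core(\G)}$, which is generated by elliptic elements. Your explicit separation argument for the residual finiteness of the vertex quotients, and your use of the pullback correspondence of finite quotients, simply spell out steps the paper states in one line.
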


\begin{proof} 
We apply the preceding general considerations with $K=\core(\G)$.  The vertex groups of $\GG_K$ are residually finite, since the cores of the vertex groups of $\GG$ are contained in $K$, 
and the edge groups are trivial by hypothesis. Thus $\pi_1\GG_K$ is a free product of residually finite
groups. In particular it is residually finite, so the natural epimorphism $\pi_1\GG_K\to \G/\core(\G)$ splits
and $\core(\G) = \ker(\G\to\pi_1\GG_K)$, which is generated by elliptic elements.
\end{proof} 

\begin{remark}
When we interpolate between $\G$ and $\core(\G)$ with finite-index subgroups $\G=\G_0>\G_1>\dots$, the 
graphs $|\widetilde{\GG}/\!/\G_i|$ need not be trees. For example, in the infinite dihedral group $\G=(\Z/2)\ast(\Z/2)$, if $\G_1$
is the infinite cyclic subgroup of index two, then $|\widetilde{\GG}/\!/\G_1|$ has two vertices and two edges.
\end{remark}

\begin{lemma}\label{l:free-prod} If a countable group $\G$  is the free product of non-trivial groups $A_i$, then
\begin{enumerate}
\item $A_i\cap \core^\alpha(\G) = \core^\alpha(A_i)$ for any ordinal $\alpha$, and 
\item $\core^\alpha(\G)$ is a free product of copies of the groups $\core^\alpha(A_i)$.
\item If $\sup_i \depth(A_i)$  is a limit ordinal,  then $\depth(\G) = \sup_i \depth(A_i)$;
\item if $\sup_i \depth(A_i)$  is a successor ordinal,  $\depth(\G) = \sup_i \depth(A_i)+\omega.$ 
\end{enumerate} 
\end{lemma}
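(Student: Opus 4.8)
The plan is to establish (1) and (2) together by transfinite induction on $\alpha$, and then to read off (3) and (4) from the resulting description of $\core^\alpha(\G)$ as a free product of conjugates of the $\core^\alpha(A_i)$. Throughout I regard $\G$ as the fundamental group of a tree of groups $\GG$ with trivial edge groups and vertex groups the $A_i$, and -- as the phrase ``free product'' suggests -- I take there to be at least two factors (if there is only one factor then (4) is false, the rest trivially true). The case $\alpha=0$ is vacuous. For $\alpha=1$: the inclusion $\core(A_i)\le A_i\cap\core(\G)$ holds because every homomorphism from $\G$ to a finite group restricts on $A_i$ and so kills $\core(A_i)$; conversely, every finite quotient $A_i\to Q$ extends to a homomorphism $\G\to Q$ (send the remaining factors to $1$), so any element of $A_i\cap\core(\G)$ dies in it. That gives (1) when $\alpha=1$. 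Lemma~\ref{l:tree}, whose hypothesis is vacuous here since the edge groups are trivial, then says $\core(\G)$ is generated by elliptic elements, so the Kurosh subgroup theorem exhibits $\core(\G)$ as a free product of conjugates of the subgroups $\core(\G)\cap A_i$, which by (1) and the normality of $\core(\G)$ are the conjugates of $\core(A_i)$. That is (2) when $\alpha=1$.

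For the successor step $\alpha=\beta+1$, I apply the case $\alpha=1$ just proved to the free product decomposition of $\core^\beta(\G)$ given by (2) at $\beta$: write $\core^\beta(\G)$ as a free product of copies $C_\lambda$ of the groups $\core^\beta(A_i)$, arranged so that the untwisted copy $\core^\beta(\G)\cap A_i=\core^\beta(A_i)$ occurs for each $i$, discarding trivial factors (the case where all are trivial, i.e.\ $\core^\beta(\G)=1$, is immediate). This gives $\core^{\beta+1}(\G)=\core(\core^\beta(\G))$ as a free product of conjugates of the $\core(C_\lambda)$, with $C_\lambda\cap\core^{\beta+1}(\G)=\core(C_\lambda)$. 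Since $\core(C_\lambda)$ is a copy of some $\core(\core^\beta(A_i))=\core^{\beta+1}(A_i)$, this is (2) at $\beta+1$; and intersecting with $A_i$, using $\core^{\beta+1}(\G)\le\core^\beta(\G)$ and (1) at $\beta$, gives $A_i\cap\core^{\beta+1}(\G)=\core^\beta(A_i)\cap\core^{\beta+1}(\G)=\core(\core^\beta(A_i))=\core^{\beta+1}(A_i)$, which is (1) at $\beta+1$. For a limit ordinal $\lambda$, part (1) is the trivial identity $A_i\cap\bigcap_{\beta<\lambda}\core^\beta(\G)=\bigcap_{\beta<\lambda}\core^\beta(A_i)=\core^\lambda(A_i)$. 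For (2) at $\lambda$ it suffices, by Kurosh and normality as above, to show the natural surjection $\phi$ from the free product of the groups $A_i/\core^\lambda(A_i)$ onto $\G/\core^\lambda(\G)$ is injective. Given a non-trivial element with normal form $\bar a_1\cdots\bar a_k$, lift it to a reduced word $g=a_1\cdots a_k$ in $\G$; since $a_j\notin\core^\lambda(A_{i_j})=\bigcap_{\beta<\lambda}\core^\beta(A_{i_j})$ there is $\beta_j<\lambda$ with $a_j\notin\core^{\beta_j}(A_{i_j})$. Put $\beta=\max_j\beta_j<\lambda$; then $a_j\notin\core^\beta(A_{i_j})$ for every $j$ (as $\core^\beta(A_{i_j})\subseteq\core^{\beta_j}(A_{i_j})$), so by (2) at $\beta$ the image of $g$ in $\G/\core^\beta(\G)$ -- which that hypothesis identifies with the free product of the groups $A_i/\core^\beta(A_i)$ -- is a reduced word all of whose syllables are non-trivial, hence is itself non-trivial. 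Thus $g\notin\core^\beta(\G)\supseteq\core^\lambda(\G)$; so $\phi$ is injective, hence an isomorphism, and (2) at $\lambda$ follows by the Kurosh theorem as before.

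Now (3) and (4). Put $\mu:=\sup_i\depth(A_i)$. By (2), $\core^\alpha(\G)$ is a free product of conjugates of the groups $\core^\alpha(A_i)$, and I claim it is finite if and only if it is trivial, if and only if every $\core^\alpha(A_i)$ is trivial, which by Lemma~\ref{l:cores-to-depth}(1) means $\depth(A_i)\le\omega\cdot\alpha$ for all $i$, i.e.\ $\mu\le\omega\cdot\alpha$. Indeed, if $\core^\alpha(\G)$ were finite and non-trivial then only one of the conjugate free factors could be non-trivial; this forces $\core^\alpha(A_j)=1$ for all but one index $i_0$ and $\G=\core^\alpha(\G)\,A_{i_0}$, so $\G/\core^\alpha(\G)$ would equal the image of $A_{i_0}$ -- but by (2), $\G/\core^\alpha(\G)$ is a free product of the groups $A_i/\core^\alpha(A_i)$ in which the image of $A_{i_0}$ is a proper free factor (its complement, the free product of the $A_i$ with $i\ne i_0$, being non-trivial since there are at least two factors), a contradiction. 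Hence, by Lemma~\ref{l:cores-to-depth}(4), $\depth(\G)=\omega\cdot\nu$ where $\nu$ is the least ordinal with $\omega\cdot\nu\ge\mu$. If $\mu$ is a limit ordinal then $\mu=\omega\cdot\nu$, so $\depth(\G)=\mu$: this is (3). If $\mu$ is a successor ordinal then -- being the largest of the values $\depth(A_i)$, each of which is $0$, $1$, a limit ordinal, or the successor of a limit ordinal -- it has the form $\omega\cdot a+1$; the least $\nu$ with $\omega\cdot\nu\ge\omega\cdot a+1$ is $a+1$; and $\depth(\G)=\omega\cdot(a+1)=\omega\cdot a+\omega=\mu+\omega$: this is (4).

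The step I expect to be the main obstacle is the limit case of the induction for part (2), namely showing that $\core^\lambda(\G)$ is generated by elliptic elements (equivalently, that $\phi$ above is injective). What makes it go through is that this is really a statement about one element at a time: any element of $\G$ has a normal form involving only finitely many factors and finitely many syllables, so only finitely many of the finite-level approximations $\core^\beta(\G)$ with $\beta<\lambda$ are needed to witness that it does not lie in $\core^\lambda(\G)$, and the supremum of finitely many ordinals below a limit ordinal remains below that limit ordinal.
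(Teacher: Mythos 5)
Your argument is correct and follows essentially the same route as the paper: free factors are retracts (giving (1)), the core filtration is analysed by transfinite induction using Lemma~\ref{l:tree} and the tree/elliptic-element structure at successor stages (you phrase this via the Kurosh subgroup theorem, the paper via the quotient graph of groups, which is the same thing), and the limit step is handled by exactly the paper's observation that a reduced word has only finitely many syllables, so a single $\beta<\lambda$ witnesses non-membership. The only cosmetic divergence is in (3)--(4), where you route the conclusion through ``$\core^\alpha(\G)$ finite $\Rightarrow$ trivial'' and Lemma~\ref{l:cores-to-depth}(4), whereas the paper instead invokes the residual finiteness of free products of finite groups; both are valid.
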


\begin{proof} Assertion (1) holds more generally for retracts $A$ of $\G$: 
 for $\alpha=1$ it is the obvious assertion that   
an element of $A$  survives in a finite quotient of $A$ if and only if it survives in some finite quotient of $\G$;
for arbitrary $\alpha$, the result follows by transfinite induction, noting repeatedly that a retraction $\G\to A$
restricts to a retraction $\core(\G)\to\core(A)$. 

For (2), we consider the action of $\core^\alpha(\G)$ on the Bass-Serre tree 
for the splitting $\GG$ of $\G$ as a free product of the $A_i$. 
The vertex groups in the quotient graph-of-groups are conjugates of  $A_i\cap \core^\alpha(\G)$,
which by (1) are copies of $\core^\alpha(A_i)$. Proceeding by transfinite induction,  with 
Lemma \ref{l:tree} as the base step, we shall argue that
the underlying graph $|\widetilde{\GG}/\!/\core^\alpha(\G)|$ is a tree. In the inductive
step, we assume that this is the case for ordinals $\beta<\alpha$. 

At successor ordinals $\alpha=\alpha_0+1$, we can apply Lemma \ref{l:tree} with $\core^{\alpha_0}(\G)$ in place of
$\G$, since  $\widetilde{\GG}/\!/\core^{\alpha_0}(\G)$ (which is a {\em{tree}} of groups, by induction) has the same
universal cover as $\GG$.
To clarify the argument at limit ordinals, we first note that 
an arbitrary normal subgroup $N$ in  $\G = \bigast_iA_i$ will be generated by its elliptic
elements $\bigcup_{i,\gamma}(N\cap A_i^\gamma)$ if and only if there does {\em not} exist
an  element $\nu\in N$ whose free-product normal form is $\nu =a_1\dots a_m$, 
with all $a_j\in A_{i(j)}\ssm N$. (Such an element $\nu$ determines a reduced loop in the quotient
graph of groups $\GG/\!/N$.) We apply this observation with $N=\core^\alpha (\G)$:
 if there were to exist $\nu\in\core^\alpha(\G)$ with $\nu =a_1\dots a_m$ and
all $a_j\in A_{i(j)}\ssm \core^\alpha(\G)$ then, because $ \core^\alpha(\G) = \bigcap_{\beta<\alpha} \core^\beta(\G)$,
this would imply that $a_j\in A_{i(j)}\ssm \core^\beta(\G)$ for all $a_j$ when $\beta<\alpha$ is 
sufficiently large; but this cannot happen, because it implies that $\core^\beta(\G)$ is not generated by its
elliptic elements, contrary to the inductive hypothesis. This completes the proof of (2).

\smallskip
Towards proving (3) and (4), we  consider $\sigma =  \sup_i \depth(A_i)$. 
If $\sigma$ is a limit ordinal $\sigma = \omega\cdot\alpha$ then  $\core^\alpha(A_i)=1$ for all $i$,
so from (2) we have  $\core^\alpha(\G)=1$, hence $\depth(\G)\le \omega\cdot\alpha = \sigma$. 
And
by applying to $A_i<\G$ the observation that  $H<K$ implies $\depth(H)\le \depth(K)$, we have $\sigma\le \depth(\G)$,
so   (3) is proved.
For (4), we suppose $\sigma =  \omega\cdot\beta + 1$,
which means that $\core^\beta(A_i)$ is finite for all indices $i$ but $\core^\beta(A_i)$ is non-trivial for some $i$.
In this case, $\core^\beta(\G)$, which has more than one free factor isomorphic to $\core^\beta(A_i)$ for each $i$,
is a free product of finite groups and is infinite. As any free product of finite groups is residually finite, we conclude
that $\core^\beta(\G)$ is infinite but $\core^{\beta+1}(\G)=1$, hence $\depth(\G) = \omega\cdot(\beta + 1) = \sigma +\omega$.
\end{proof}

\subsection{Direct Products}

For completeness, we record the analogue of Lemma \ref{l:free-prod} for direct sums. The proof in this case
is straightforward, so we omit it.

\begin{lemma}\label{l:dir-prod} If a countable group $\G$  is the direct sum of non-trivial groups $A_i$, then
\begin{enumerate}
	\item $A_i\cap \core^\alpha(\G) = \core^\alpha(A_i)$ for any ordinal $\alpha$, and
	\item $\core^\alpha(\G)$ is the direct sum of the groups $\core^\alpha(A_i)$.
	\item If $\sup_i \depth(A_i)$  is a limit ordinal,  then $\depth(\G) = \sup_i \depth(A_i)$;
	\item if $\sup_i \depth(A_i)$  is a successor ordinal,  then $\depth(\G) = \sup_i \depth(A_i)$ if there
are only finitely many $A_i$ that realise the supremum and  $\depth(\G) = \sup_i \depth(A_i)+\omega$ if there are  
infinitely many.
\end{enumerate} 
\end{lemma}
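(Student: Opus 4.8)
The statement to prove is Lemma~\ref{l:dir-prod}, the direct-sum analogue of Lemma~\ref{l:free-prod}. Here is how I would organise the argument.

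\medskip

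The plan is to mirror the proof of Lemma~\ref{l:free-prod}, but to exploit the fact that a direct sum is genuinely simpler than a free product: the relevant ``combinatorial geometry'' (a Bass--Serre tree in the free-product case) is replaced by the trivial observation that $\G=\bigoplus_i A_i$ retracts onto each factor $A_i$ and, more importantly, onto each finite sub-sum $\bigoplus_{i\in F}A_i$, and that every finite quotient of $\G$ factors through such a finite sub-sum on all but finitely many coordinates. First I would prove (1): since each $A_i$ is a retract of $\G$, assertion (1) is immediate from the retract version of assertion (1) in Lemma~\ref{l:free-prod}, which was proved there by transfinite induction using that a retraction $\G\to A_i$ restricts to a retraction $\core(\G)\to\core(A_i)$. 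No new idea is needed.

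\medskip

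For (2) I would argue by transfinite induction on $\alpha$ that $\core^\alpha(\G)=\bigoplus_i\core^\alpha(A_i)$. The inclusion $\bigoplus_i\core^\alpha(A_i)\le\core^\alpha(\G)$ is clear once one knows, at the base step $\alpha=1$, that an element supported on finitely many coordinates dies in a finite quotient of $\G$ iff each of its coordinates dies in a finite quotient of the corresponding $A_i$ (this is where one uses that $\G$ surjects a finite direct sum of the $A_i$). For the reverse inclusion at $\alpha=1$: given $g=(g_i)_i\in\core(\G)$ with finite support $F$, the retraction onto the coordinate $i\in F$ shows $g_i\in\core(A_i)$, so $g\in\bigoplus_i\core(A_i)$. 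At successor ordinals apply $\core(-)$ to the inductive identity $\core^{\alpha_0}(\G)=\bigoplus_i\core^{\alpha_0}(A_i)$, using that this is again a countable direct sum of (now smaller) groups, so the $\alpha=1$ case applies. At limit ordinals $\core^\lambda(\G)=\bigcap_{\beta<\lambda}\bigoplus_i\core^\beta(A_i)=\bigoplus_i\bigcap_{\beta<\lambda}\core^\beta(A_i)=\bigoplus_i\core^\lambda(A_i)$, where the middle equality holds because an element has finite support and the $\core^\beta(A_i)$ are nested decreasing in $\beta$.

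\medskip

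For (3) and (4) I would read off the answer from (2). Set $\sigma=\sup_i\depth(A_i)$. If $\sigma=\omega\cdot\alpha$ is a limit ordinal, then $\core^\alpha(A_i)=1$ for every $i$, so by (2) $\core^\alpha(\G)=1$, whence $\depth(\G)\le\omega\cdot\alpha=\sigma$; and $\depth(\G)\ge\depth(A_i)$ for each $i$ because $A_i\le\G$, so $\depth(\G)=\sigma$. If $\sigma=\omega\cdot\beta+1$ is a successor, then $\core^\beta(A_i)$ is finite for all $i$ and non-trivial for at least one $i$. By (2), $\core^\beta(\G)=\bigoplus_i\core^\beta(A_i)$ is a direct sum of finite groups; it is finite precisely when only finitely many of the $\core^\beta(A_i)$ are non-trivial, i.e. when only finitely many $A_i$ realise the supremum (note $\core^\beta(A_i)\ne1$ iff $\depth(A_i)>\omega\cdot\beta$ iff $\depth(A_i)=\omega\cdot\beta+1=\sigma$, the last step using that depths are limits or successors of limits). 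In that case $\depth(\G)=\omega\cdot\beta+1=\sigma$ by Lemma~\ref{l:cores-to-depth}(3). If infinitely many $A_i$ realise the supremum, then $\core^\beta(\G)$ is an infinite direct sum of non-trivial finite groups, hence infinite and residually finite, so $\core^{\beta+1}(\G)=1$ while $\core^\beta(\G)$ is infinite; by Lemma~\ref{l:cores-to-depth}(4), $\depth(\G)=\omega\cdot(\beta+1)=\sigma+\omega$. The only point requiring a moment's care is the residual finiteness of an arbitrary direct sum of finite groups, which is standard (project to one factor at a time). I expect no real obstacle; the subtlety, such as it is, lies entirely in keeping the finite-support bookkeeping straight when intersecting over a limit ordinal in step (2).
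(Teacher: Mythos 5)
Your proof is correct, and it is precisely the ``straightforward'' argument that the paper omits: part (1) via the retract observation from Lemma \ref{l:free-prod}, part (2) by transfinite induction using coordinate retractions and finite support in place of the Bass--Serre tree, and parts (3)--(4) read off from (2) together with Lemma \ref{l:cores-to-depth} and the residual finiteness of direct sums of finite groups. No gaps; the limit-ordinal interchange of intersection and direct sum is justified correctly by finite support.
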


\subsection{Some more basic facts}

The following observations will be useful throughout and we shall often
use them without comment.  

\begin{lemma} For all ordinals $\alpha$ and groups $G$,
\begin{enumerate}
\item if $H\le G$ then $\core^\alpha(H)\le \core^\alpha(G)$; 
\item if $H\le \core^\beta(G)$ for all $\beta<\alpha$ then $\core(H)\le \core^\alpha(G)$;  
\item if $p:G\twoheadrightarrow Q$ is a surjection with kernel $K$, then $\core^\alpha(G)\le p^{-1}(\core^\alpha(Q))$
with equality if $K\le \core^\alpha(G)$. 
\end{enumerate}
\end{lemma}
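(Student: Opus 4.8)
The plan is to establish all three items by transfinite induction on $\alpha$, the only genuine content lying in the case $\alpha=1$. Two elementary remarks make the transfinite bookkeeping routine and are worth isolating at the outset: first, $p^{-1}$ commutes with arbitrary intersections whereas the $p$-image of an intersection is only \emph{contained} in the intersection of the images; second, a finite quotient of $G$ whose kernel contains $\ker p$ factors through $p$, while conversely every finite quotient of $Q$ pulls back through $p$ to a finite quotient of $G$.

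For (1), the base case is the observation that if $h\in\core(H)$ and $\phi\colon G\onto F$ is a finite quotient, then $\phi$ maps $H$ onto the finite group $\phi(H)$, so $\phi(h)=1$; hence $h\in\core(G)$. At a successor $\alpha=\beta+1$ one applies this base case to the pair of subgroups $\core^\beta(H)\le\core^\beta(G)$ given by induction, obtaining $\core^{\beta+1}(H)\le\core^{\beta+1}(G)$; at a limit ordinal one intersects. Item (2) is then immediate from (1): if $\alpha=\beta+1$ is a successor, the hypothesis contains $H\le\core^\beta(G)$ and (1) gives $\core(H)\le\core^{\beta+1}(G)=\core^\alpha(G)$; if $\alpha$ is a limit, then for each $\gamma<\alpha$ the hypothesis gives $H\le\core^\gamma(G)$, hence $\core(H)\le\core^{\gamma+1}(G)\le\core^\gamma(G)$ by (1), and intersecting over $\gamma<\alpha$ finishes (the case $\alpha=0$ being the triviality $\core(H)\le H\le G$).

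For the inclusion in (3) I would again induct on $\alpha$, noting throughout that $\core^\alpha(G)\le p^{-1}(\core^\alpha(Q))$ is equivalent to $p(\core^\alpha(G))\le\core^\alpha(Q)$. In the base case $\alpha=1$, composing a finite quotient $\psi\colon Q\onto F$ with $p$ shows that $g\in\core(G)$ forces $\psi(p(g))=1$, so $p(\core(G))\le\core(Q)$; and when in addition $K:=\ker p$ lies in $\core(G)$, every finite quotient of $G$ has kernel containing $K$ and hence factors through $p$, which gives the reverse inclusion $p^{-1}(\core(Q))\le\core(G)$ and, since $p^{-1}p(\core(G))=\core(G)K=\core(G)$, the equality $\core(G)=p^{-1}(\core(Q))$. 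At a successor $\alpha=\beta+1$, induction gives $p(\core^\beta(G))\le\core^\beta(Q)$, so $p$ restricts to a surjection of $\core^\beta(G)$ onto its image; applying the base case to this surjection and then (1) to the inclusion of the image into $\core^\beta(Q)$ yields $p(\core^{\beta+1}(G))\le\core^{\beta+1}(Q)$. At a limit $\lambda$ one has $p(\core^\lambda(G))\subseteq\bigcap_{\beta<\lambda}p(\core^\beta(G))\subseteq\bigcap_{\beta<\lambda}\core^\beta(Q)=\core^\lambda(Q)$.

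For the equality clause, assume $K\le\core^\alpha(G)$, so that $K\le\core^\delta(G)$ for every $\delta\le\alpha$; I would prove by a parallel transfinite induction that $p$ restricts to a surjection $\core^\delta(G)\onto\core^\delta(Q)$, necessarily with kernel $K$, and that $\core^\delta(G)=p^{-1}(\core^\delta(Q))$, for all $\delta\le\alpha$. The successor step $\delta=\epsilon+1$ upgrades ``into'' to ``onto'' by applying the equality half of the base case to the surjection $\core^\epsilon(G)\onto\core^\epsilon(Q)$, whose kernel $K$ satisfies the needed hypothesis $K\le\core(\core^\epsilon(G))=\core^{\epsilon+1}(G)$; the limit step follows because the preimage of any point of $\bigcap_{\gamma<\delta}\core^\gamma(Q)$ lies in each $\core^\gamma(G)=p^{-1}(\core^\gamma(Q))$ and hence in $\core^\delta(G)$. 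Taking $\delta=\alpha$ gives $p^{-1}(\core^\alpha(Q))=\core^\alpha(G)K=\core^\alpha(G)$. None of this is hard; the single point needing care — and the only place the hypothesis $K\le\core^\alpha(G)$ is used — is the gap between $p$ sending $\core^\delta(G)$ merely \emph{into} $\core^\delta(Q)$, which is all the plain inclusion provides, and $p$ restricting to a genuine \emph{surjection} onto it, which is what lets the base case iterate cleanly and what delivers the equality.
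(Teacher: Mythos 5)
Your proof is correct and complete. The paper actually states this lemma without proof (it is introduced only as a collection of ``observations ... used without comment''), so there is no argument of the author's to compare against; your transfinite inductions supply exactly what is needed. You have also correctly isolated the one genuinely delicate point, namely the equality clause of (3): the plain inclusion only gives that $p$ maps $\core^{\delta}(G)$ \emph{into} $\core^{\delta}(Q)$, and one must carry the stronger statement that $p$ restricts to a \emph{surjection} $\core^{\delta}(G)\onto\core^{\delta}(Q)$ with kernel $K$ through the induction (using $K\le\core^{\alpha}(G)\le\core^{\delta+1}(G)$ at each successor step) in order to iterate the base case; your handling of this, and of the limit steps via $p^{-1}$ commuting with intersections, is accurate.
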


\section{Finitely presented groups realising the ordinals $\omega \cdot n$}\label{s:2}

We want to prove that $\omega \cdot n$ is the residual finiteness depth of a finitely presented group, 
proceeding by induction on $n$.  In order to make the induction run smoothly, it is convenient to
make the inductively-constructed groups $\G_n$ have the following properties:
\begin{enumerate}
\item $\G_n$ is finitely presented;
\item $\depthRF(\G_n)= \omega\cdot n$;
\item $\G_n$ has a generating set $\{a_0,\dots,a_m\}$ for which there is an epimorphism $h_n:\G_n\to\Z$
with $h_n(a_i)=h_n(a_0)$ for $i=1,\dots,m$.  (The integer $m$ depends on $n$.)
\end{enumerate}

In the inductive step, we construct a group $\G_{n}^0$ satisfying (1) and (2) then define $\G_{n}=\G_{n}^0\ast\Z$,
invoking the following observation to get condition (3).

\begin{lemma}\label{l2} 
If $\G_n^0$ satisfies (1) and (2), then $\G_n=\G_n^0\ast\Z$ satisfies (1), (2) and (3).
\end{lemma}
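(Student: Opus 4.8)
The plan is to verify conditions (1), (2) and (3) for $\G_n=\G_n^0\ast\Z$ in turn; only (3) calls for any real idea, the other two being routine. For (1), a free product of two finitely presented groups is finitely presented — take the disjoint union of a finite presentation of $\G_n^0$ with the one-generator, no-relator presentation of $\Z$ — so $\G_n$ is finitely presented. For (2), I would simply invoke Lemma \ref{l:free-prod}: the two free factors have depths $\depth(\G_n^0)=\omega\cdot n$ (hypothesis (2)) and $\depth(\Z)=\omega$, both factors are countable and non-trivial, and since $n\ge 1$ we have $\omega\cdot n\ge\omega$, so $\sup\{\depth(\G_n^0),\depth(\Z)\}=\omega\cdot n$. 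This supremum is a limit ordinal, so part (3) of Lemma \ref{l:free-prod} (rather than part (4)) applies and yields $\depth(\G_n)=\omega\cdot n$. It is worth flagging explicitly that one uses $n\ge 1$ here, precisely so that the supremum equals $\omega\cdot n$ and is a limit ordinal.

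For (3) the observation is that one cannot in general send a prescribed generating set of $\G_n^0$ to a single value in $\Z$, because nothing is known about the abelianisation of $\G_n^0$; but the new free factor lets one fix this by twisting. I would fix a finite generating set $g_1,\dots,g_k$ of $\G_n^0$ and a generator $t$ of the $\Z$ factor, then set $a_0:=t$ and $a_i:=g_it$ for $1\le i\le k$. Since $g_i=a_ia_0^{-1}$, the set $\{a_0,\dots,a_k\}$ generates $\G_n$. Let $h_n\colon\G_n\to\Z$ be the homomorphism that is trivial on $\G_n^0$ and sends $t\mapsto 1$; this is well defined on the free product (the zero map $\G_n^0\to\Z$ is a homomorphism) and is an epimorphism since $h_n(t)=1$. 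Then $h_n(a_0)=1$ and $h_n(a_i)=h_n(g_i)+h_n(t)=0+1=1$, so $h_n(a_i)=h_n(a_0)$ for all $i$, giving (3) with $m=k$.

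I do not anticipate a serious obstacle: the only substantive ingredient is the twisting construction of the generating set and homomorphism in (3), and the verification of (2) is a direct application of Lemma \ref{l:free-prod} once one has recorded that $n\ge 1$ places us in the limit-ordinal case. The remaining points — finite presentability of free products, and the well-definedness and surjectivity of $h_n$ — are standard and need only brief mention.
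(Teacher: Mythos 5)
Your proposal is correct and follows essentially the same route as the paper's (much terser) proof: define $a_0=t$ and $a_i=g_it$, apply Lemma \ref{l:free-prod} to get condition (2), and obtain $h_n$ by killing the free factor $\G_n^0$. The extra details you supply (finite presentability of free products, the check that the supremum $\omega\cdot n$ is a limit ordinal so that part (3) rather than part (4) of Lemma \ref{l:free-prod} applies) are exactly the points the paper leaves implicit.
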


\begin{proof}
If $\G_n^0$ is generated by $b_1,\dots,b_m$ and $\zeta$ generates $\Z$, define $a_0=\zeta$ and $a_i=b_i\zeta$. 
Lemma \ref{l:free-prod} 
assures us that  $\G_n\ast\Z$ satisfies condition  (2), and killing $\G_n^0$ yields $h_n$.
\end{proof}

\begin{remark}\label{r:BE}
The key point about property (3) is that it ensures that all of the cyclic subgroups $\<a_i\> <\G_n$ are retracts of $\G_n$.
To see why this might be useful, recall  from \cite{BE}
that if $\GG$ is a tree of groups where each vertex group is residually finite and 
each edge group is a retract of the vertex groups into which it includes, then $\pi_1\GG$ is residually finite.
\end{remark}

It will be convenient to have a finitely presented group $\La$ for which $\coreRF(\La)\cong\Z$. A famous example
of such a group is Deligne's central extension \cite{deligne}
of ${\textrm{Sp}}(2g,\Z)$, any $g\ge 2$,  for which $\coreRF(\La)$ is the subgroup
of index 2 in the kernel of the central extension
\begin{equation}\label{e1}
1\to  \Z \to \La\to {\textrm{Sp}}(2g,\Z)\to 1.
\end{equation}
Henceforth, $\La$ will always denote this central extension.  Later on, we shall also need  a central extension 
of a residually finite group that has finite centre and
is not residually finite. Such a group can be obtained from  $\La$  by
killing a subgroup of odd  index $d>2$ in the kernel of the extension (\ref{e1}),
\begin{equation}\label{e:d}
1\to \Z/d\Z \to \DL \to {\textrm{Sp}}(2g,\Z)\to 1.
\end{equation}  
The value of $d>2$ will not matter, but we fix $g\ge 3$ to ensure that $\DL$ is perfect (which
follows immediately from the fact that ${\textrm{Sp}}(2g,\Z)$ is perfect,  since the central $\Z/d\Z$
dies in every finite quotient of $\DL$).

\begin{theorem}\label{t:2.2}
For every  positive integer $n$, there exists a finitely presented group $\G_n$ with $\depthRF(\G_n) =\omega\cdot n$.
\end{theorem}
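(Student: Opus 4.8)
The plan is to proceed by induction on $n$, constructing groups $\G_n$ satisfying properties (1), (2), (3) above. For $n=1$, take $\G_1^0 = \SL(3,\Z)$ (or any finitely presented residually finite group that is infinite and perfect, e.g. $\mathrm{Sp}(2g,\Z)$), which has $\depthRF = \omega$, and set $\G_1 = \G_1^0 \ast \Z$; by Lemma \ref{l:free-prod}(3) this still has depth $\omega$, and Lemma \ref{l2} gives property (3). For the inductive step, assume $\G_{n-1}$ satisfies (1)--(3) with generating set $\{a_0,\dots,a_m\}$ and retraction data $h_{n-1}:\G_{n-1}\to\Z$. I would form an amalgamated free product that glues a copy of Deligne's group $\La$ (with $\coreRF(\La)\cong\Z$) onto $\G_{n-1}$ along the infinite cyclic subgroups: set
\[
\G_n^0 \;=\; \G_{n-1} \ast_{\,\Z} \La,
\]
where the edge group $\Z$ is identified with one of the retract subgroups $\<a_i\>$ in $\G_{n-1}$ and with $\coreRF(\La)$ in $\La$. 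Then put $\G_n = \G_n^0 \ast \Z$ to recover property (3) via Lemma \ref{l2}, and property (1) is clear since amalgams and free products of finitely presented groups over finitely generated subgroups are finitely presented.

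The heart of the argument is verifying property (2), i.e. $\depthRF(\G_n^0) = \omega\cdot n$. First I would compute $\coreRF(\G_n^0)$. The graph-of-groups $\GG$ for the splitting $\G_n^0 = \G_{n-1}\ast_\Z\La$ has edge group $\Z \le \coreRF(\La)$; moreover the edge subgroup $\<a_i\> \le \G_{n-1}$ lies in $\coreRF(\G_{n-1})$ precisely when... — and here is the point where I must be careful: $\<a_i\>$ is a \emph{retract} of $\G_{n-1}$, so it is residually finite and hence \emph{not} contained in $\coreRF(\G_{n-1})$ unless we choose things correctly. So instead the edge group must be chosen to lie in the core on the $\La$-side (it is $\coreRF(\La)$) but map to a retract on the $\G_{n-1}$-side; the relevant mechanism is Remark \ref{r:BE}: in $\GG_K$ with $K = \coreRF(\G_n^0)$, the vertex groups become residually finite and the edge group is a retract of the $\G_{n-1}$-vertex group, so by the Baumslag--Evans-type criterion $\pi_1(\GG_K)$ is residually finite, the natural surjection $\pi_1(\GG_K)\to \G_n^0/\coreRF(\G_n^0)$ splits, and hence $\coreRF(\G_n^0)$ is generated by elliptic elements. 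Analysing the action of $\coreRF(\G_n^0)$ on the Bass--Serre tree then identifies it (up to the free-product structure coming from $|\widetilde{\GG}/\!/\coreRF(\G_n^0)|$, which is a tree by the Lemma \ref{l:tree} argument) as a free product of conjugates of $\coreRF(\G_{n-1})$ and conjugates of $\coreRF(\La)\cap(\text{core}) = \coreRF(\La)\cong\Z$, with the $\Z$'s absorbed into/along the amalgamation. Concretely I expect $\coreRF(\G_n^0)$ to be (a free product involving) $\coreRF(\G_{n-1})$, possibly together with free factors isomorphic to $\Z$.

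Granting that, the depth computation closes the induction: by Lemma \ref{l:free-prod} (applied to the free-product decomposition of the core, or iterated), $\depthRF(\coreRF(\G_n^0))$ is governed by $\depthRF(\coreRF(\G_{n-1}))$ together with the $\Z$-factors, which have depth $\omega$; by induction $\depthRF(\coreRF(\G_{n-1})) = \omega\cdot(n-1)$ when $n\ge 2$ (and $=\omega$ in a degenerate sense when handling the base), so $\coreRF^{\,n}(\G_n^0) = \coreRF^{\,n-1}(\coreRF(\G_n^0)) = 1$ while $\coreRF^{\,n-1}(\G_n^0)$ is infinite. Lemma \ref{l:cores-to-depth}(4) then yields $\depthRF(\G_n^0) = \omega\cdot n$, and Lemmas \ref{l2} and \ref{l:free-prod}(3) propagate this to $\G_n$. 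I expect the main obstacle to be the bookkeeping in the previous paragraph: pinning down exactly which elliptic elements generate $\coreRF(\G_n^0)$ and confirming the underlying graph $|\widetilde{\GG}/\!/\coreRF(\G_n^0)|$ is a tree, so that Lemma \ref{l:free-prod} applies cleanly; this requires the transfinite/inductive tree argument of Lemma \ref{l:free-prod}(2) adapted from the pure free-product case to the amalgam over $\Z$, using that the edge group sits inside the core on one side and is a retract on the other.
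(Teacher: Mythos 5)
Your overall strategy --- induct on $n$, maintain properties (1)--(3), and at each step amalgamate Deligne's group $\La$ onto $\G_{n-1}$ along $\coreRF(\La)\cong\Z$, then analyse the core via Bass--Serre theory --- is the paper's, but your construction attaches only \emph{one} copy of $\La$, along a single cyclic subgroup $\<a_i\>$, and this is where the argument breaks. The only consequence of that amalgamation is that $a_i$ (being identified with a generator of $\coreRF(\La)$) dies in every finite quotient of $\G_n^0$; the rest of $\G_{n-1}$ is under no such constraint, so $\G_{n-1}\cap\coreRF(\G_n^0)$ is merely some normal subgroup containing $\coreRF(\G_{n-1})$ and the normal closure of $a_i$ --- not all of $\G_{n-1}$. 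Consequently the depth need not go up by $\omega$. Your own accounting contains the tell-tale off-by-one: from $\depthRF(\G_{n-1})=\omega\cdot(n-1)$ one gets $\depthRF(\coreRF(\G_{n-1}))=\omega\cdot(n-2)$, not $\omega\cdot(n-1)$; so if $\coreRF(\G_n^0)$ really were a free product of copies of $\coreRF(\G_{n-1})$ and of $\Z$, Lemma \ref{l:free-prod} would give $\depthRF(\G_n^0)=\omega+\omega\cdot(n-2)=\omega\cdot(n-1)$, i.e.\ no gain over $\G_{n-1}$ at all.

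The missing idea is to amalgamate a copy $\La_i$ of $\La$ onto \emph{every} generator $a_0,\dots,a_m$ of $\G_{n-1}$, identifying $a_i$ with a generator $\zeta_i$ of $\coreRF(\La_i)$. Then every finite quotient of $\G_n^0$ kills every $\zeta_i=a_i$, hence kills all of $\G_{n-1}$, and factors through the residually finite group $\bigast_i\La_i/\<\zeta_i\>$; so $\coreRF(\G_n^0)$ contains $\G_{n-1}$ entirely and is a \emph{tree} of groups whose vertex groups are conjugates of the whole of $\G_{n-1}$ together with cyclic groups $\<\zeta_i\>$, with infinite cyclic edge groups. Note that this first core is therefore not yet a free product, so Lemma \ref{l:free-prod} cannot be applied to it directly --- a second point your sketch glosses over when it speaks of free factors of the first core. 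One must pass to $\core^2(\G_n^0)$: the retractions $h_{n-1}$ on the $\G_{n-1}$-vertices glue to a map of $\coreRF(\G_n^0)$ onto $\Z$ that is injective on every edge group and every cyclic vertex group, and, by extending each finite quotient of $\G_{n-1}$ over the complementary components of the tree, one shows $\G_{n-1}\cap\core^2(\G_n^0)=\core(\G_{n-1})$; hence $\core^2(\G_n^0)$ is a genuine free product of copies of $\core(\G_{n-1})$ and a free group. Only now does Lemma \ref{l:free-prod} apply, giving $\depthRF(\core^2(\G_n^0))=\omega\cdot(n-2)$ and therefore $\depthRF(\G_n^0)=\omega\cdot 2+\omega\cdot(n-2)=\omega\cdot n$ for $n\ge 3$, with the cases $n=1,2$ handled directly.
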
 

\begin{proof}
We argue by induction on $n$. For $\G_1$ we can take $\Z$, for example.
For $\G_2$ we can take $\G_2^0\ast\Z$ where $\G_2^0\cong\La$ is the group from (\ref{e1}).  In the inductive step, we
assume  $n\ge 2$ and that $\G_n$ satisfies the conditions (1) to (3) described above, and we must construct $\G_{n+1}^0$
satisfying (1) and (2).
Define $\G_{n+1}^0$ to be the
group obtained by amalgamating $m+1$ copies of $\La$ with $\G_n$, where the $i$-th copy $\La_i$  of  $\La$
is amalgamated by identifying a generator $\zeta_i$ of $\coreRF(\La_i)\cong\Z$ with the generator $a_i\in\G_n$.

In any finite image of $\G_{n+1}^0$, each $\zeta_i$ must ``die" (i.e. map trivially), so $\G_n$ must
die, since each of its generators has been identified with some $\zeta_i$. Thus
every map from $\G_{n+1}^0$ to a finite group will factor through the epimorphism 
$\rho: \G_{n+1}^0\to \bigast_{i=0}^m\La_i/\<\zeta_i\>$. This free product of copies of $\La/\<\zeta\>$
 is residually finite, so the kernel of $\rho$ is
$\coreRF(\G_{n+1}^0)$.  Bass-Serre theory, as in the proof of Lemma \ref{l:free-prod}, tells us that this
kernel is the fundamental group of a graph of groups $\T$ some of whose vertex groups are conjugates of $\G_n$
while the others (from the $\La_i$ vertices) are infinite cyclic groups; each of the latter type is generated by 
each of the edge groups incident at that vertex, since each is a conjugate of  $\<\zeta_i\>$
and $\<\zeta_i\>$ is normal in $\La_i$. Moreover, Lemma \ref{l:tree} tells us that the underlying graph of $\T$ is a tree.

The map $h_n$ in condition (3) provides a map onto $\Z$ from each
vertex group of type $\G_n$ in the decomposition $\mathcal{T}$,
and these  maps agree on intersections (the edge groups). Thus we obtain a map $h_{n+1}$ from $\pi_1\T=\core(\G_{n+1}^0)$
onto $\Z$ that restricts to an isomorphism on each edge  group and on each of the vertex groups that is 
cyclic. It follows that the subgroup of $\core(\G_{n+1}^0)$ consisting of elements that die in every finite quotient
of $\coreRF(\G_{n+1}^0)$
intersects the edge groups and cyclic vertex groups trivially, and therefore this subgroup
(which is $\core^2(\G_{n+1}^0)$)  is the
fundamental group of a graph of groups whose edge groups are trivial and whose vertex groups are either
trivial or else conjugates in $\core(\G_{n+1}^0)$ of $I_n:=\G_n\cap \core^2(\G_{n+1}^0)$, which we claim is $\core(\G_n)$. 

It is obvious that $I_n$ contains  $\coreRF(\G_n)$, so to prove the claim it is enough to argue that each
$\g\in \G_n\ssm \coreRF(\G_n)$ survives in some finite quotient of $\core(\G_{n+1}^0)$. By definition, 
there is a finite quotient $p_\g:\G_n\to Q$ with $p_\g(\g)\neq 1$, and we will be done if we can extend $p_\g$
to the whole of $\coreRF(\G_{n+1}^0)$. For this, we consider the components of the graph-of-groups obtained
when we delete the base vertex (where the local group is $\G_n$) from 
the tree of groups $\T$ described above. There is one such component $C_i$ of $\T$ 
  for each of the generators $a_0,\dots,a_m$ of $\G_n$, and $\pi_1C_i<\pi_1\T  = \coreRF(\G_{n+1}^0)$
  intersects $\G_n$ in $\<a_i\>$.
If $p_\g(a_i)$ has order $e_i$,  we extend $p_\g$ to $\pi_1C_i$ by composing the surjection
$h_{n+1}: \pi_1C_i\to\Z$ (defined above) 
with $\Z\to \Z/e_i\Z$. This completes the proof of the claim. 

At this point, we have proved  that  $\core^2(\G_{n+1}^0)$  is a free product
of   (infinitely many) copies of $I_n=\coreRF(\G_n)$ and possibly a free group. By induction,  
$\depthRF(\G_n)=\omega\cdot n$, so
Lemma \ref{l:free-prod} tells us that $\depthRF(\core^2(\G_{n+1}^0))=\omega\cdot (n-1)$. Thus
$\depthRF(\G_{n+1}^0)=\omega\cdot (n+1)$. This completes the induction.  
\end{proof}

Later, we shall need the following additional property of our construction. 

\begin{addendum} \label{ad1}
For $n\ge 3$, the subgroup
$\core^{n-1}(\G_n)$ is a free product of a free group and conjugates of $\core(\G_2)$.
\end{addendum}

\begin{proof} In the last paragraph of the preceding proof, we saw that $\core^2(\G_{n+1}^0)$ was
a free product of copies of $\coreRF(\G_n)$ and a free group. The addendum follows by a simple induction.
For  the inductive step,  since  $\G_{n+1}=\G_{n+1}^0\ast\Z$, we use Lemma \ref{l:free-prod} to see
that   $\core(\G_{n+1})$ is  the free product of copies of $\core(\G_{n+1}^0)$,    hence
$\core^2(\G_{n+1})$ is a free product of copies of $\core^2(\G_{n+1}^0)$, which 
is a free product of copies of $\coreRF(\G_n)$ and a free group.  Repeated application of 
Lemma \ref{l:free-prod} then tells us that $\core^{n}(\G_{n+1})$ is a free product of 
copies of $\core^{n-1}(\G_n)$.
\end{proof}
 
\begin{remark}\label{r:better2} 
In the proof of Theorem \ref{t:2.2}, it was convenient to name a specific $\G_1$ and $\G_2$ before 
starting the inductive step, but we could have made many other choices for these groups. 
If we chose as $\G_1$ any   residually finite group with generators $\{a_0,\dots,a_m\}$ as in condition (3),
then we could get $\G_2^0$ by amalgamating $m+1$ copies of $\Lambda$ with $\G_1$, as in the inductive step of 
the proof,  identifying the generator $\zeta_i$ of the core of the $i$-th copy  $\La_i$ 
with $a_i\in\G_1$. 
Arguing as in the second paragraph of the proof, we would then get $\core(\G_2)$ to be the fundamental group of a 
tree of groups 
with cyclic edge groups and vertex groups that are either cyclic or copies of $\G_1$.  The  
 inclusions of edge groups in this tree of groups are either
isomorphisms or else inclusions into copies of $\G_1$ of one of the cyclic groups $\<a_i\>$.
In particular, all of the edge groups are retracts in the adjacent vertex groups, so the
fundamental group $\core(\G_2)$ is residually finite,  by \cite{BE}. Hence $\depth(\G_2) = \omega\cdot 2$,
as required.  The remainder of the proof then continues as before.
\end{remark}   

\section{Residual finiteness depth for countable groups}\label{s:3}%\label{s:countable}

The purpose of this section is to prove the following result, a step towards  Theorem \ref{t3}.

\begin{proposition}\label{t:cbl} For every countable limit ordinal $\alpha$, 
there exists  a countable group $G$ with $\depthRF(G) =\alpha$. 
\end{proposition}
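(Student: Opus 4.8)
The plan is to build $G$ by transfinite recursion on the countable limit ordinal $\alpha$, using free products to pass through limit stages and the amalgamation trick of Theorem \ref{t:2.2} (or rather its wreath-product variant, since we are no longer constrained to stay finitely presented) to climb by $\omega$ at each successor stage. Write $\alpha=\omega\cdot\beta$ with $\beta$ a countable ordinal. The base cases $\beta=1$ (take $G=\Z$) and $\beta$ finite (take the group $\G_\beta$ of Theorem \ref{t:2.2}) are already in hand, so the real content is the limit case and the successor case for infinite $\beta$.

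First I would treat successor ordinals $\beta=\beta_0+1$. Assuming inductively that there is a countable group $H$ with $\depthRF(H)=\omega\cdot\beta_0$ and, to make the recursion run, that $H$ is generated by a countable set $\{a_i\}_{i\in I}$ each of whose cyclic subgroups $\langle a_i\rangle$ is a retract of $H$ (the analogue of condition (3) in Section \ref{s:2}; one arranges this exactly as in Lemma \ref{l2}, by replacing $H$ with $H\ast\Z$ and using a height homomorphism onto $\Z$), I would amalgamate one copy $\La^{(i)}$ of Deligne's group $\La$ with $H$ for each $i$, identifying a generator $\zeta_i$ of $\coreRF(\La^{(i)})\cong\Z$ with $a_i$. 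Exactly as in the proof of Theorem \ref{t:2.2}, every finite quotient of the resulting $G$ factors through the free product of residually finite groups $\bigast_i\La^{(i)}/\langle\zeta_i\rangle$, so $\coreRF(G)=\ker$ of that map is the fundamental group of a tree of groups (Lemma \ref{l:tree}) whose vertex groups are conjugates of $H$ and of cyclic groups; pushing once more, $\core^2(G)$ is a free product of copies of $\coreRF(H)$ and a free group, so by Lemma \ref{l:free-prod} and the inductive hypothesis $\depthRF(\core^2(G))=\omega\cdot(\beta_0-1)$ when $\beta_0$ is finite, and more generally $\depthRF(\coreRF(G))=\omega\cdot\beta_0$, whence $\depthRF(G)=\omega\cdot(\beta_0+1)=\alpha$. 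One also re-establishes the retract condition for $G$ by passing to $G\ast\Z$.

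For a limit ordinal $\beta$, I would pick a strictly increasing sequence of ordinals $\beta_1<\beta_2<\cdots$ with $\sup_k\beta_k=\beta$ (possible since $\beta$ is countable), take by induction countable groups $A_k$ with $\depthRF(A_k)=\omega\cdot\beta_k$, and set $G=\bigast_k A_k$. Since each $\omega\cdot\beta_k$ is a limit ordinal and $\sup_k\omega\cdot\beta_k=\omega\cdot\beta=\alpha$ is a limit ordinal, Lemma \ref{l:free-prod}(3) gives $\depthRF(G)=\alpha$ immediately; the group is countable as a countable free product of countable groups, and the retract condition is inherited from the $A_k$ (or re-imposed by a further free product with $\Z$). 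The only mild subtlety is that for the successor step to apply at the next stage one needs the inductive hypothesis to carry the retract-generating-set data, so I would fold that into the statement being proved by recursion from the outset.

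The main obstacle is bookkeeping at limit stages of the recursion: one must verify that $\core^\gamma$ of a free product behaves well through transfinite iteration — but this is precisely what Lemma \ref{l:free-prod}(2) provides, including the delicate point that $|\widetilde{\GG}/\!/\core^\gamma(\G)|$ remains a tree at limit ordinals. With that lemma available, the argument is a clean transfinite induction, and the construction stays within countable groups throughout (finite presentability is of course lost, consistent with the Corollary). I expect the write-up to be short once the recursion hypothesis is stated to include the retract condition.
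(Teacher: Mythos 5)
Your limit-ordinal step is exactly the paper's (free product of groups realising a cofinal sequence of limit ordinals below $\alpha$, then Lemma \ref{l:free-prod}(3)), and the base cases are fine. The successor step, however, contains a genuine error of ordinal arithmetic, and it is precisely the error that forces the paper to abandon the amalgamation construction beyond $\omega^2$. The amalgamation-with-$\La$ construction produces a group $G$ whose residual core contains the old group $H$: you show $\core^2(G)$ is a free product of copies of $\core(H)$ and a free group. This adds depth \emph{on the left}: all one can conclude is $\depth(G)\le \omega\cdot 2+\depth(\core^2(G))$, and more generally $\depth(G)\le\omega+\depth(\core(G))$. When $\beta_0=n$ is finite this is what you want, since $\omega+\omega\cdot n=\omega\cdot(1+n)=\omega\cdot(n+1)$. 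But for infinite $\beta_0$ one has $1+\beta_0=\beta_0$, hence $\omega+\omega\cdot\beta_0=\omega\cdot\beta_0$: the construction is depth-neutral, and your $G$ satisfies $\depth(G)=\omega\cdot\beta_0$, not $\omega\cdot(\beta_0+1)$. Your inference ``$\depth(\core(G))=\omega\cdot\beta_0$, whence $\depth(G)=\omega\cdot(\beta_0+1)$'' implicitly uses $\depth(G)=\depth(\core(G))+\omega$, i.e.\ addition on the right, which is not what controlling $\core(G)$ buys you. The paper states the obstruction explicitly at the start of Section \ref{s:4}: for $\alpha\ge\omega^2$, $\depth(G)=\depth(\core(G))$ because $\omega+\alpha=\alpha$ --- indeed it exploits this very phenomenon there to show that certain amalgamations and HNN extensions do \emph{not} change the depth.

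The repair is the one the paper makes: to add $\omega$ on the \emph{right} you need a construction that strips away the depth of the old group \emph{first}, leaving fresh residual depth underneath. This is what the wreath product $A\wr B$ with $A$ a non-trivial finite perfect group does: $\core^\gamma(A\wr B)=P_{A,B}\rtimes\core^\gamma(B)$ (Corollary \ref{c:wr}), whence $\depth(A\wr B)=\depth(B)+\depth(P_{A,B})=\depth(B)+\omega$ by Proposition \ref{p:wr}. You gesture at ``the wreath-product variant'' in your opening paragraph, but the wreath product is not a variant of the amalgamation trick --- the two constructions place the increment on opposite sides of the ordinal sum --- and the argument you actually carry out is the amalgamation one, which fails for the reason above. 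With the successor step replaced by the wreath product, the rest of your recursion (including the countability bookkeeping and the use of Lemma \ref{l:free-prod}(2) at limit stages) goes through; note also that the retract-generating-set data you propose to carry along is then unnecessary, since the wreath product step does not require it.
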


\subsection{Wreath Products}

The  results in this subsection can be gleaned from \cite{BJ}, but we present them in a way
that is  convenient to our aims.
We use the standard notation $A\wr B$ for the restricted wreath product, i.e. the semidirect product
$P_{A,B}\rtimes B$ where $P_{A,B}=\oplus_{b\in B} A_b$ is the direct sum of copies of $A$ indexed by $B$, with
$B$ acting by left-multiplication on the index set. We also need to consider more general {\em{permutational
wreath products}} $A\wr_{I} C = P_{A,I}\rtimes C$, 
where $P_{A,I}$ is the direct sum of copies of $A$ indexed by a set $I$
and $C$ acts by a homomorphism $C\to {\textrm{Perm}}(I)$.  We shall only be interested in the setting
where the action of $C$ on $I$ is free, which is case 
when $C$ is a subgroup of $B$ and the action is left multiplication on $B=I$; in this case,
$A\wr_{B} C$ is the subgroup of $A\wr B$ generated by $P_{A,B}$ and $C$.

\begin{lemma}\label{l:wr} Let $A\wr_{I} C=P_{A,I}\rtimes C$
 be a permutational wreath product where the action of $C$ on $I$ is free.
If $A$ is perfect and $C$ is infinite, then $\core(A\wr_{I} C)=P_{A,I}\rtimes \core(C)$.
\end{lemma}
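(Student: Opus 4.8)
The plan is to analyse the finite quotients of $\G := A\wr_I C = P\rtimes C$, where $P = P_{A,I}$, by exploiting two facts: that $P$ is generated by perfect subgroups (the copies $A_i$ of $A$), and that $C$ acts freely on $I$ with $C$ infinite. First I would prove the inclusion $P\rtimes\core(C)\le\core(\G)$. Since $A$ is perfect, each $A_i\le P$ is perfect, hence $A_i\le\core(H)$ for any group $H$ containing it as a subgroup (a perfect group has no non-trivial finite quotient onto which it maps, so it dies in every finite quotient of the ambient group); applying this with $H=\G$ gives $A_i\le\core(\G)$ for all $i$, so $P=\langle A_i : i\in I\rangle\le\core(\G)$. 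Because $\core(\G)$ is normal, it then contains $\core(\G)\cap C$ together with $P$; and the image of $\core(\G)$ in $\G/P\cong C$ is contained in $\core(C)$ by the general fact (item (3) of the final lemma of Section~\ref{s:1}) that $\core$ is carried into $\core$ of a quotient. So it remains only to check the reverse containment in the quotient, i.e. that every element of $C$ lying in $\core(C)$ actually lies in $\core(\G)$ — equivalently that $P\core(C)/P$ maps trivially in every finite quotient of $\G$.

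The substantive direction is $\core(\G)\le P\rtimes\core(C)$. Equivalently: if $c\in C\ssm\core(C)$, or more generally if $g = w\cdot c$ with $w\in P$ and $c\in C\ssm\core(C)$, then $g$ survives in some finite quotient of $\G$. Pick a finite quotient $q:C\onto Q$ with $q(c)\ne 1$, let $N=\ker q$, a finite-index subgroup of $C$, and let $J = I/N$ be the (finite) set of $N$-orbits on $I$; since $C$ acts freely, $N$ acts freely on $I$ and $Q=C/N$ acts freely on $J$. Form the finite group $\bar A^{\,\mathrm{ab}}$ — a non-trivial finite quotient of $A^{\mathrm{ab}}$, chosen so that the finitely many $A$-coordinates of $w$ that are relevant do not all die; here one uses that $w$ has finite support and lies in $P$, and that $A$ has \emph{some} non-trivial finite quotient (if $A$ has none, then $A$ itself is perfect-with-trivial-core and one must instead argue directly that $w\in\core(\G)$, which is fine because then $g$ and $c$ have the same image in every finite quotient). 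Then build the finite permutational wreath product $\bar A^{\,\mathrm{ab}}\wr_J Q$ and a homomorphism $\G\to \bar A^{\,\mathrm{ab}}\wr_J Q$ by reducing each $A$-coordinate mod the chosen finite quotient, collapsing the index set $I\to J$, and composing $C\to Q$. One checks this is well-defined (the collapse of the index set is $C$-equivariant because $N$ is the $C$-stabiliser data), and that $g=wc$ has non-trivial image: its $C$-component maps to $q(c)\ne 1$. This exhibits $g\notin\core(\G)$, completing the reverse inclusion.

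I expect the main obstacle to be the bookkeeping in the construction of the auxiliary finite quotient: ensuring the map $I\to J=I/N$ is genuinely $C$-equivariant so that it extends to a homomorphism of wreath products, and handling the case where one wants to detect a non-trivial element $w$ of $P$ (rather than just a non-trivial element of $C$) — though in fact, since we only need to separate elements outside $P\rtimes\core(C)$, and every element of $\core(\G)$ we are trying to rule out has non-trivial image already in $C/\core(C)$, it suffices to detect the $C$-coordinate, so the $\bar A^{\mathrm{ab}}$ factor is only needed to make the target genuinely of wreath-product type (or can be dispensed with by mapping to $Q$ alone via $\G\to C\to Q$). A secondary subtlety is the degenerate case where $A$ is itself a perfect group with no finite quotients at all — but then the preceding paragraph already shows $P\le\core(\G)$, and the $C$-direction argument via $\G\onto C\onto Q$ still works verbatim, so no special treatment is really needed. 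In writing this up I would present the clean version: prove $P\le\core(\G)$ from perfectness of $A$; observe $\core(\G)/P\le\core(\G/P)=\core(C)$; and for the reverse, for each $c\in C\ssm\core(C)$ use $\G\onto C\onto Q$ (with $Q$ finite, $q(c)\ne1$) to conclude $c\notin\core(\G)$, hence $\core(\G)\subseteq P\core(C)$, and combine.
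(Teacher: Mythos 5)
There is a genuine gap in the first (and substantive) inclusion. You justify $P_{A,I}\le\core(A\wr_I C)$ by the claim that a perfect group ``has no non-trivial finite quotient onto which it maps, so it dies in every finite quotient of the ambient group''. This is false: a perfect group has no non-trivial finite \emph{abelian} (or solvable) quotient, but it can have many finite quotients --- $A_5$ is perfect and is a non-trivial finite quotient of itself, and the group $\mathrm{Sp}(2g,\Z)$, $g\ge 3$, used elsewhere in this paper is perfect and residually finite. In particular your asserted ``general fact'' that $A_i\le\core(H)$ for every $H$ containing $A_i$ already fails for $H=A_i=A_5$, where $\core(A_5)=1$. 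A symptom of the problem is that your argument for this inclusion uses neither the infiniteness of $C$ nor the freeness of the action, yet both hypotheses are essential: if $C$ is finite and $A=A_5$, then $A\wr_I C$ is finite, hence residually finite, and $\core(A\wr_I C)=1\neq P_{A,I}$.

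The correct argument (the one in the paper) is a commutator trick that uses both hypotheses: given a finite quotient $p:A\wr_I C\to Q$, the infiniteness of $C$ forces $p(c)=1$ for some non-trivial $c\in C$; freeness gives $c\cdot i\neq i$, so $y^c\in A_{c\cdot i}$ commutes with $x\in A_i$, whence $p([x,y])=[p(x),p(y)^{p(c)}]=p([x,y^c])=1$. Thus $p(A_i)$ is abelian, and being also perfect it is trivial; this gives $P_{A,I}\le\core(A\wr_I C)$. Your reverse inclusion --- projecting $\G\onto C\onto Q$ to detect any element whose $C$-component lies outside $\core(C)$ --- is correct and coincides with the paper's argument; note, though, that the detour through $\bar A^{\mathrm{ab}}$ is not only unnecessary but vacuous, since $A$ perfect means $A^{\mathrm{ab}}=1$.
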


\begin{proof}
For all $i\in I$ and $x,y\in A_i$ and any finite quotient $p: A\wr_{I} C\to Q$
we have $p(c)=1$ for some non-trivial $c\in C$. Writing $g^c$ for conjugation of $g$ by $c$,
\[
p([x,y]) = [p(x), p(y)] = [p(x), p(y)^{p(c)}] = [p(x), p(y^c)] = p([x, y^c]) =p(1)=1,
\]
where the penultimate equality comes from the fact that $y^c\in A_{c\cdot i}$ commutes with $x\in A_i$
since $c\cdot i\neq i$. Thus $[A_i, A_i]\subseteq \core(A\wr_{I} C)$ for all $i$. But $A_i$ is perfect,
so $A_i=[A_i,A_i]$ and $P_{A,I}\subseteq \core(A\wr_{I} C)$. 

Conversely, if $g\notin P_{A,I}\rtimes \core(C)$, then $g$ projects to $C\smallsetminus \core(C)$ and therefore 
survives in some finite quotient of $C$ and hence of $A\wr_{I} C$.
\end{proof} 

Arguing by transfinite induction, we deduce:

\begin{corollary}\label{c:wr} For all ordinals $\alpha$, if $A$ is perfect and $\core^\beta(B)$ is infinite
for all $\beta<\alpha$, then $\core^{\alpha}(A\wr B)=P_{A,B}\rtimes \core^\alpha(B)$.  
\end{corollary}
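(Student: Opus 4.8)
The plan is to prove the corollary by transfinite induction on $\alpha$, with Lemma \ref{l:wr} doing all the real work at the successor stages and an elementary computation of an intersection of subgroups at the limit stages. One preliminary observation will be used repeatedly: left multiplication of a group on itself restricts to a \emph{free} action of every subgroup, so for any $C\le B$ the subgroup of $A\wr B$ generated by $P_{A,B}$ and $C$ is the permutational wreath product $A\wr_B C=P_{A,B}\rtimes C$ with $C$ acting freely on the index set $B$. This is the only place where any care is needed in matching hypotheses.

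For the base case $\alpha=0$ there is nothing to do, since $\core^0(A\wr B)=A\wr B=P_{A,B}\rtimes B=P_{A,B}\rtimes\core^0(B)$. For a successor $\alpha=\beta+1$, I would invoke the inductive hypothesis to get $\core^\beta(A\wr B)=P_{A,B}\rtimes\core^\beta(B)$, and then note, via the preliminary observation, that this subgroup is the permutational wreath product $A\wr_B\core^\beta(B)$ with free $\core^\beta(B)$-action. Since $\beta<\alpha$, the standing hypothesis guarantees that $\core^\beta(B)$ is infinite, and $A$ is perfect, so Lemma \ref{l:wr} applies to this wreath product and gives
\begin{align*}
\core^{\beta+1}(A\wr B)&=\core\bigl(\core^\beta(A\wr B)\bigr)=\core\bigl(P_{A,B}\rtimes\core^\beta(B)\bigr)\\
&=P_{A,B}\rtimes\core\bigl(\core^\beta(B)\bigr)=P_{A,B}\rtimes\core^{\beta+1}(B),
\end{align*}
as required.

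For a limit ordinal $\alpha$, every $\beta<\alpha$ satisfies the infinitude hypothesis, so the inductive hypothesis gives $\core^\beta(A\wr B)=P_{A,B}\rtimes\core^\beta(B)$ for all such $\beta$, whence
\[
\core^\alpha(A\wr B)=\bigcap_{\beta<\alpha}\core^\beta(A\wr B)=\bigcap_{\beta<\alpha}\bigl(P_{A,B}\rtimes\core^\beta(B)\bigr)=P_{A,B}\rtimes\core^\alpha(B);
\]
the last equality holds because every term of the intersection contains the common subgroup $P_{A,B}$, so the intersection is $P_{A,B}\rtimes\bigcap_{\beta<\alpha}\core^\beta(B)$, and $\bigcap_{\beta<\alpha}\core^\beta(B)=\core^\alpha(B)$ by definition. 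This closes the induction. I do not expect a genuine obstacle here: the argument is essentially bookkeeping, and the single point worth flagging is that the hypothesis ``$\core^\beta(B)$ is infinite for all $\beta<\alpha$'' is precisely what licenses — and all that is needed to license — the use of Lemma \ref{l:wr} at each successor step strictly below $\alpha$.
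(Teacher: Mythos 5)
Your proof is correct and is exactly the transfinite induction the paper intends (the paper merely says ``arguing by transfinite induction, we deduce'' and leaves the details to the reader): the successor step is Lemma \ref{l:wr} applied to the permutational wreath product $A\wr_B\core^\beta(B)$ with its free left-multiplication action, and the limit step is the intersection computation you give. You have correctly identified the one point requiring care, namely that each term of the semidirect product contains $P_{A,B}$, so the intersection is the preimage of $\bigcap_{\beta<\alpha}\core^\beta(B)$.
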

 
\begin{proposition}\label{p:wr}
Let $A\neq 1$ be a countable group that is  perfect and let $B$ be a countable group that is infinite
and $\beta$-residually finite for some $\beta$.
\begin{enumerate}
\item If $\depth(A)$ is a limit ordinal, then 
\[\depth(A \wr B) = \depth(B) + \depth(A).\]
\item If $\depth(A)$ is a successor ordinal, then 
\[\depth(A\wr B) = \depth(B) + \depth(A) + \omega.\]
\end{enumerate}
(Note that in both cases, $\depth(A\wr B)$ is a limit ordinal.)
\end{proposition}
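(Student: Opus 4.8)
The plan is to compute the residual cores of $A\wr B$ explicitly and read off the depth. Since $B$ is infinite and $\beta$-residually finite for some $\beta$, its depth is a well-defined ordinal which, not being $0$ or $1$, equals $\omega\cdot\mu$ or $\omega\cdot\mu+1$ for some $\mu\ge1$. In either case $F:=\core^\mu(B)$ is finite by Lemma \ref{l:cores-to-depth} (trivial when $\depth(B)=\omega\cdot\mu$, non-trivial when $\depth(B)=\omega\cdot\mu+1$), and the same lemma shows $\core^\gamma(B)$ is infinite for every $\gamma<\mu$. Hence Corollary \ref{c:wr} applies at every ordinal $\le\mu$ and gives $\core^\gamma(A\wr B)=P_{A,B}\rtimes\core^\gamma(B)$ for $\gamma\le\mu$; in particular $\core^\gamma(A\wr B)\supseteq P_{A,B}$ is infinite for $\gamma<\mu$, and
\[
N:=\core^\mu(A\wr B)=P_{A,B}\rtimes F.
\]

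Next I would record a general fact: if $\core^\gamma(G)$ is infinite for every $\gamma<\mu$, then $\depth(G)=\omega\cdot\mu+\depth(\core^\mu(G))$. This follows by combining Lemma \ref{l:cores-to-depth} with the identity $\core^{\mu+\gamma}(G)=\core^\gamma(\core^\mu(G))$, itself proved by transfinite induction on $\gamma$ (at limit stages $\lambda$ one uses that $\{\mu+\gamma:\gamma<\lambda\}$ is cofinal in $\mu+\lambda$). Applied to $G=A\wr B$, this reduces the whole problem to computing $\depth(N)$ and gives $\depth(A\wr B)=\omega\cdot\mu+\depth(N)$.

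To compute $\depth(N)$, note that $F\le B$ acts on $B$ by left translation, hence freely, so $B$ decomposes into countably many $F$-orbits each isomorphic to $F$; regrouping the summands of $P_{A,B}$ accordingly identifies $N$ with the ordinary wreath product $M\wr F$, where $M=\bigoplus_{\mathbb N}A$ is a countable direct sum of copies of $A$ (when $F=1$ this is simply $N=M$). By Lemma \ref{l:dir-prod}, since infinitely many of the factors realise the supremum $\depth(A)$, we have $\depth(M)=\depth(A)$ when $\depth(A)$ is a limit ordinal and $\depth(M)=\depth(A)+\omega$ when $\depth(A)$ is a successor; either way $\theta:=\depth(M)$ is a limit ordinal, say $\theta=\omega\cdot\mu''$. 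The base group $M^{|F|}=\bigoplus_{f\in F}M$ of $N=M\wr F$ is normal of finite index and is again a direct sum of $\mathbb N$ copies of $A$, so $\depth(M^{|F|})=\theta$ as well. Now one uses the elementary fact that $\core^\gamma(H)=H\cap\core^\gamma(G)$ whenever $H\le G$ has finite index (transfinite induction from the case $\gamma=1$, for which one notes that a finite-index subgroup of $H$ contains a subgroup that is normal and of finite index in $G$). Thus $\core^\gamma(N)\supseteq\core^\gamma(M^{|F|})$ is infinite for $\gamma<\mu''$, whereas $\core^{\mu''}(N)\cap M^{|F|}=\core^{\mu''}(M^{|F|})=1$, so $\core^{\mu''}(N)$ is a \emph{finite} normal subgroup of $N$ meeting $M^{|F|}$ trivially; being normal it therefore centralises $M^{|F|}$. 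But no non-trivial $n\in N\ssm M^{|F|}$ centralises $M^{|F|}=\bigoplus_{f\in F}M$: the $F$-component of such an $n$ permutes the summands of $M^{|F|}$ non-trivially (here $M\neq1$, i.e.\ $A\neq1$, is used), whereas conjugation by any element of $M^{|F|}$ preserves each summand. Hence $\core^{\mu''}(N)\le M^{|F|}$, so $\core^{\mu''}(N)=1$ and $\depth(N)=\omega\cdot\mu''=\depth(M)$.

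Putting the pieces together, $\depth(A\wr B)=\omega\cdot\mu+\depth(N)=\omega\cdot\mu+\depth(M)$; since $\depth(M)$ is a limit ordinal (in particular $\ge\omega$), this equals $\depth(B)+\depth(M)$ regardless of whether $\depth(B)$ is $\omega\cdot\mu$ or $\omega\cdot\mu+1$, the possible extra $+1$ being absorbed via $1+\depth(M)=\depth(M)$. By the description of $\depth(M)$ above, this is $\depth(B)+\depth(A)$ when $\depth(A)$ is a limit ordinal and $\depth(B)+\depth(A)+\omega$ when $\depth(A)$ is a successor, and in both cases the result is a limit ordinal. The step I expect to be the main obstacle is the last computation of $\depth(N)$: identifying $N$ precisely as $M\wr F$ and excluding a finite normal subgroup of $N$ that survives at level $\mu''$; the remaining steps are routine manipulations with Corollary \ref{c:wr}, Lemma \ref{l:dir-prod}, the core-shift identity, and ordinal arithmetic.
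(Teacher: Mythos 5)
Your argument is correct and follows the same route as the paper: apply Corollary \ref{c:wr} down to the level where $\core^\mu(B)$ becomes finite, reduce to the finite-index subgroup $P_{A,B}$ of $\core^\mu(A\wr B)$, and finish with Lemma \ref{l:dir-prod}. The only difference is that where the paper simply asserts that passing to a finite-index subgroup of an infinite group does not change the depth, you actually justify this for the group at hand (via the identity $\core^\gamma(H)=H\cap\core^\gamma(G)$ for finite-index subgroups and the centraliser argument ruling out a spurious finite core at level $\mu''$), which is a worthwhile elaboration rather than a deviation.
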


\begin{proof}
If $\depth(B)=\omega\cdot\alpha$ or $\omega\cdot\alpha +1$, then $\core^\alpha(B)$ is finite, so $\core^\alpha(A\wr B)$ 
contains $P_{A,B}$ as a subgroup of finite index, by Corollary \ref{c:wr}. Passing to a subgroup of
finite index does not affect the residual finiteness depth
 of an infinite group, so  $\depth(A\wr B) = \depth(B) + \depth(P_{A,B})$.

Lemma \ref{l:dir-prod} tells us that  $\depth(P_{A,B})=\depth(A)$ if $\depth(A)$ is a limit ordinal 
and   $\depth(P_{A,B})=\depth(A)+\omega$ otherwise. 
\end{proof} 

Brody and Jankiewicz \cite{BJ} proved that there are finitely generated groups with residual finiteness depth $\omega\cdot n$
for any $n\in\N$ by repeatedly applying the following consequence of Proposition \ref{p:wr}.

\begin{corollary}
If $\alpha$ is the residual finiteness depth of a finitely generated (resp.~countable) group, then so is $\alpha + \omega$.
\end{corollary}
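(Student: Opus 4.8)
The plan is to apply Proposition \ref{p:wr} with a carefully chosen perfect group $A$ so that the wreath product $A\wr B$ has residual finiteness depth $\alpha+\omega$ starting from a group $B$ with $\depth(B)=\alpha$. First I would observe that the hypotheses of Proposition \ref{p:wr} require $A\ne 1$ to be a countable perfect group, and ideally $A$ should be \emph{residually finite}, so that $\depth(A)=\omega$, which is a limit ordinal. A concrete choice that works in both the finitely generated and countable settings is a non-trivial finite perfect group, say $A=A_5$; then $\depth(A_5)=1$, which is a successor ordinal, so part (2) of Proposition \ref{p:wr} gives $\depth(A\wr B)=\depth(B)+1+\omega=\depth(B)+\omega$ (using ordinal arithmetic: $1+\omega=\omega$). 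Alternatively one can take $A$ to be an infinite finitely generated perfect residually finite group, e.g. $A=\mathrm{SL}(3,\Z)$, so that $\depth(A)=\omega$ is a limit ordinal and part (1) gives $\depth(A\wr B)=\depth(B)+\omega$ directly. Either way the arithmetic collapses to $\alpha+\omega$.

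Next I would check the remaining hypotheses of Proposition \ref{p:wr}. We need $B$ infinite and $\beta$-residually finite for some $\beta$: if $\alpha=\depth(B)$ then $B$ is $\alpha$-residually finite by definition of depth, so this holds with $\beta=\alpha$; and $B$ is infinite because its depth is a non-trivial ordinal (any finite group has depth $1$, and $\alpha\ge\omega$ since $\alpha$ is a limit ordinal, or more generally $\alpha$ is the depth of an infinite group). With all hypotheses verified, Proposition \ref{p:wr} yields $\depth(A\wr B)=\alpha+\omega$. Finally, I would note that the wreath product preserves the finiteness condition: if $B$ is finitely generated then $A\wr B$ is finitely generated (being generated by $B$ together with a generating set of a single copy $A_e$ of $A$, since the $B$-conjugates of $A_e$ generate $P_{A,B}$); and if $B$ is countable then $A\wr B$ is countable, as a countable union $P_{A,B}=\bigoplus_{b\in B}A_b$ of countable groups extended by the countable group $B$.

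The one genuine subtlety, rather than an obstacle, is making sure the chosen $A$ has depth that is either a limit ordinal or a successor so that the relevant case of Proposition \ref{p:wr} applies and the arithmetic actually produces $\alpha+\omega$ rather than something larger. Taking $A$ finite perfect is the cleanest route: $\depth(A)=1$, case (2) applies, and $\depth(B)+1+\omega=\depth(B)+\omega$ since left-addition of $\omega$ absorbs any finite tail. I expect no real difficulty beyond this bookkeeping; the corollary is essentially immediate once Proposition \ref{p:wr} is in hand, and the parenthetical assurance in that proposition that $\depth(A\wr B)$ is always a limit ordinal is consistent with $\alpha+\omega$ being a limit ordinal whenever $\alpha$ is (or more generally whenever $\alpha$ is a depth, since $\alpha+\omega=(\alpha+n)+\omega$ for any finite $n$).
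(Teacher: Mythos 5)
Your proposal is correct and takes essentially the same route as the paper, which proves the corollary by applying Proposition \ref{p:wr} with $A$ a finitely generated perfect residually finite group and $B$ of depth $\alpha$ (a finite perfect group works exactly as you say, via $\depth(B)+1+\omega=\depth(B)+\omega$). The only detail you gloss over is the degenerate case $\alpha\in\{0,1\}$, where $B$ is finite and the proposition does not apply; there $\alpha+\omega=\omega=\depth(\Z)$, which is how the paper dismisses it as trivial.
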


\begin{proof} The assertion is trivial if $\alpha \in\{0,1\}$, so assume $\alpha$ is infinite.
In Proposition \ref{p:wr}, take $A\neq 1$ to be a finitely generated, perfect, residually finite
 group and choose $B$ with $\depth(B)=\alpha$.
\end{proof}

\subsection{Proof of Proposition \ref{t:cbl}} The trivial group has residual finiteness depth $0$,
for non-trivial finite groups $\depth(G)=1$, and for residually-finite  groups that are infinite $\depth(G)=\omega$.
For other limit ordinals $\alpha$, we proceed by transfinite induction, assuming that for each limit ordinal $\beta<\alpha$
there exists a group $G_\beta$ with $\depth(G_\beta)=\beta$. If $\alpha = \alpha_0 + \omega$, then 
we take a non-trivial finite perfect group $A$, define $G_{\alpha}= A\wr G_{\alpha_0}$ and appeal to Corollary \ref{c:wr}.
If $\alpha$ is not of this form, then it is the supremum of the  (countably many) limit ordinals $\beta<\alpha$ and we define
$G_{\alpha}$ to be either the free product or the direct sum of the inductively defined groups $G_{\beta}$ corresponding to these ordinals.
Lemma \ref{l:free-prod}(3) and Lemma \ref{l:dir-prod}(3)  assure us that $\depth(G_{\alpha})=\alpha$ in both cases. \qed

\subsection{Attaining some successor ordinals}

\begin{proposition}\label{p:omega+1}
If $\alpha$ is the residual finiteness depth of a finitely generated (resp.~countable) group, then so is $\alpha + \omega+1$.
\end{proposition}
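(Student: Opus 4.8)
The plan is to exhibit $\alpha+\omega+1$ as $\depth(H)$ for a specific quotient $H$ of the wreath product $\Delta\wr G$, where $\Delta$ is the finitely presented perfect group of (\ref{e:d}) (so that $\core(\Delta)\cong\Z/d$) and $G$ is a finitely generated (resp.\ countable) group with $\depth(G)=\alpha$. If $\alpha\in\{0,1\}$ then $\alpha+\omega+1=\omega+1=\depth(\Delta)$ and we are done, so assume $\alpha\ge\omega$. Since $\alpha$ is the depth of a group it is either a limit ordinal $\omega\cdot\gamma$ or the successor $\omega\cdot\gamma+1$ of a limit ordinal, and in either case $\alpha+\omega=\omega\cdot\delta$ with $\delta=\gamma+1$ a successor; moreover $\core^\beta(G)$ is infinite for every $\beta<\gamma$ (an easy consequence of Lemma~\ref{l:cores-to-depth}). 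Fix $g\ge3$, so that $\Delta$ is perfect and $\mathrm{Sp}(2g,\Z)=\Delta/\core(\Delta)$ is perfect and residually finite.

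I would build $H$ as follows. Inside the base $P:=\bigoplus_{h\in G}\Delta_h$ of $\Delta\wr G=P\rtimes G$ lies the subgroup $C:=\bigoplus_{h\in G}\core(\Delta_h)\cong\bigoplus_{h\in G}\Z/d$, which is central in $P$ and $G$-invariant. Let $Z_0\le C$ be the kernel of the summation homomorphism $C\to\Z/d$. Then $Z_0$ is normal in $\Delta\wr G$, and we put $H:=(\Delta\wr G)/Z_0$. Killing all of $C$ instead of just $Z_0$ collapses $\Delta\wr G$ onto $Q:=\mathrm{Sp}(2g,\Z)\wr G$, so $H$ fits into a central extension $1\to\Z/d\to H\xrightarrow{\ \bar\pi\ }Q\to1$ whose kernel $\Z/d$ is the image $C/Z_0$ of $C$ (it is central in $H$ because $G$ acts trivially on $C/Z_0$).

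The argument then rests on three observations. (1) By Proposition~\ref{p:wr}(1), applied to $\mathrm{Sp}(2g,\Z)\wr G$ (the inner factor $\mathrm{Sp}(2g,\Z)$ being perfect, residually finite, of depth the limit ordinal $\omega$), $\depth(Q)=\depth(G)+\omega=\omega\cdot\delta$, so $\core^\delta(Q)=1$; and by Corollary~\ref{c:wr}, $\core^\beta(Q)=P_{\mathrm{Sp},G}\rtimes\core^\beta(G)$ for every $\beta\le\gamma$, so for these $\beta$ the group $\core^\beta(Q)$ contains the whole base $P_{\mathrm{Sp},G}$ of $Q$. (2) The central subgroup $\Z/d\le H$ lies in $\core(\bar P)$, where $\bar P:=\bar\pi^{-1}(P_{\mathrm{Sp},G})=P/Z_0=\bigl(\bigoplus_h\Delta_h\bigr)/Z_0$: indeed Lemma~\ref{l:dir-prod}(2) gives $\core\bigl(\bigoplus_h\Delta_h\bigr)=\bigoplus_h\core(\Delta_h)=C$, so $C$ dies in every finite quotient of $\bigoplus_h\Delta_h$, hence its quotient $\Z/d=C/Z_0$ dies in every finite quotient of $\bar P$. (3) A transfinite induction on $\beta\le\delta$ shows that $\core^\beta(H)=\bar\pi^{-1}(\core^\beta(Q))$ and $\Z/d\le\core^\beta(H)$: at a successor step $\beta+1\le\delta$ one has $\beta\le\gamma$, so by (1) $\bar P\le\bar\pi^{-1}(\core^\beta(Q))=\core^\beta(H)$, whence $\Z/d\le\core(\bar P)\le\core^{\beta+1}(H)$ by (2) and monotonicity of $\core$; now the kernel of the surjection $\core^\beta(H)\to\core^\beta(Q)$ is $\Z/d\le\core^{\beta+1}(H)=\core(\core^\beta(H))$, so part~(3) of the final lemma of Section~\ref{s:1} improves the inclusion $\core^{\beta+1}(H)\le\bar\pi^{-1}(\core^{\beta+1}(Q))$ to equality; limit steps are immediate, $\bar\pi^{-1}$ commuting with the intersections. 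Evaluating at $\beta=\delta$ yields $\core^\delta(H)=\bar\pi^{-1}(\core^\delta(Q))=\bar\pi^{-1}(1)=\Z/d$, which is finite and non-trivial, so $\depth(H)=\omega\cdot\delta+1=\alpha+\omega+1$ by Lemma~\ref{l:cores-to-depth}(3). Since $H$ is a quotient of $\Delta\wr G$, it is finitely generated when $G$ is and countable when $G$ is.

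The step I expect to require the most care is (2)--(3): arranging that the central $\Z/d$ is invisible to the finite quotients not just of $H$ but of every $\core^\beta(H)$ as $\beta$ climbs to $\delta$. This is exactly why one forms the quotient by $Z_0$: one wants $\core^\gamma(H)$ to contain, with finite index, the \emph{single} group $\bar P=\bigl(\bigoplus_h\Delta_h\bigr)/Z_0$, whose residual core is the finite group $\Z/d$; the residual core of the base $\bigoplus_h\Delta_h$ of $\Delta\wr G$ itself is the \emph{infinite} residually finite group $\bigoplus_h\Z/d$, and using $\Delta\wr G$ unmodified would only give depth $\alpha+\omega\cdot2$. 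The remaining verifications --- normality of $Z_0$, the identification of $\bar P$, and the commutation of $\bar\pi^{-1}$ with cores and intersections --- are routine manipulations with the wreath-product and Bass--Serre formulae already established.
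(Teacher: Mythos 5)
Your construction is exactly the paper's: quotienting $\Delta\wr G$ by the kernel $Z_0$ of the summation map $\bigoplus_{h}\core(\Delta_h)\to\Z/d$ is precisely the paper's identification of the central copies of $Z$ in the summands of $P_{\Delta,G}$ to a single copy of $Z$, yielding the same central extension $1\to\Z/d\to E\to\overline{A}\wr B\to 1$. Your verification is correct and follows the same route, merely spelling out in your steps (2)--(3) the transfinite induction that the paper leaves implicit in the phrase ``as in Corollary \ref{c:wr}, we see that $\core^{\gamma}(E)$ is the preimage of $P_{\overline{A},B}$.''
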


\begin{proof} Let $B$ be a finitely generated group with $\depth(B)=\alpha$. (The countable case is entirely
similar.)
Let $A$ be a finitely generated perfect group that has a finite central subgroup  $Z=\core(A)\neq 1$
-- one of Deligne's group $\DL$ from (\ref{e:d}) will do nicely (with $d$ odd and $g\ge 3$). Let $\-{A}=A/Z$ and
consider the central extension $E$ of $\overline{A}\wr B$ that is obtained from $A\wr B$ by imposing relations
that identify the copy of $Z$ in each direct factor of $P_{A,B}$ with a single copy of $Z$. In other words, $E$
is the quotient of $A\wr B$ by the relations $(b^{-1}zb = z \mid \forall b\in B,\, z\in Z)$. 
We have the central extension
\[
1\to Z \to E \to \overline{A}\wr B\to 1
\]
and the preimage in $E$ of each summand $\overline{A}_b$ in $P_{\overline{A},B}$ is a copy of $A$.

If $\alpha=\omega\cdot\gamma$ then as in Corollary \ref{c:wr}, we see that $\core^{\gamma}(E)$ is the preimage  
of $P_{\overline{A},B}< \overline{A}\wr B$.  This preimage is the quotient of $P_{A,B}$
obtained by identifying the central  $Z_b<A_b$ of the summands to a single $Z$. Thus, since
$Z_b =\core(A_b)$, the core of $\core^{\gamma}(E)$ is $Z$. Therefore 
\[
\depth(E) =  \alpha + \depth(\core^{\gamma}(E)) =  \alpha + \omega + 1.
\]
If $\alpha=\omega\cdot\gamma +1$, then we apply the preceding argument to $\omega\cdot\gamma$, noting that
$ \alpha + \omega + 1 = \omega\cdot\gamma + \omega +1$.
\end{proof}

\begin{corollary}
If $\alpha$  is the residual finiteness depth of a finitely generated (resp.~countable) group, then  so
is $\alpha + \omega\cdot n + 1$, for all positive $n\in\mathbb{N}$.
\end{corollary}
 
\begin{proof}
Apply Proposition \ref{p:omega+1} repeatedly, noting that 
$(\alpha + \omega\cdot n+1) + \omega +1 = \alpha + \omega\cdot (n+1) +1$.
\end{proof}

The central idea of the  proof of Proposition \ref{p:omega+1} applies more generally:
instead of looking for $A$ and $Z=\core(A)$ with $A/Z$  residually finite, we could look for $A$ and $Z$ 
with $\depth(A/Z) = \beta$ and $Z=\core^\beta(A)$. Then, arguing in the same manner, we obtain:

\begin{proposition}\label{p:beta+1} 
If $\alpha$ is the residual finiteness depth of a finitely generated (resp.~countable) group 
and $\beta+1$ is the residual finiteness depth of a finitely generated (resp.~countable)  group $A$ 
with $\core^{\beta}(A) < A$ central, then there is a finitely generated (resp.~countable) group
with residual finiteness depth $\alpha + \beta+1$.
\end{proposition}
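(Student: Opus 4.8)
The plan is to run the proof of Proposition~\ref{p:omega+1} essentially verbatim, with $Z=\core(A)$ replaced by a deeper central core. Write $\beta=\omega\cdot\delta$ and set $Z:=\core^{\delta}(A)$; by Lemma~\ref{l:cores-to-depth}(3) the hypothesis $\depth(A)=\beta+1$ says exactly that $Z$ is finite and non-trivial, and by assumption $Z$ is central in $A$. Let $\overline{A}=A/Z$. Since $Z$ is the kernel of $A\onto\overline{A}$ and $Z\le\core^{\delta}(A)\le\core^{\gamma}(A)$ for all $\gamma\le\delta$, the last lemma of Section~\ref{s:1} gives $\core^{\gamma}(\overline{A})=\core^{\gamma}(A)/Z$ for $\gamma\le\delta$; hence $\core^{\delta}(\overline{A})=1$ while $\core^{\gamma}(\overline{A})$ is infinite for $\gamma<\delta$, so $\depth(\overline{A})=\omega\cdot\delta=\beta$. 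As in Proposition~\ref{p:omega+1} I would take $A$ to be \emph{perfect}, so that $\overline{A}$ is perfect as well; this holds for Deligne's groups $\DL$ of~(\ref{e:d}) when $\delta=1$, and it is the only feature of $A$ beyond the stated hypotheses that the argument uses. Finally fix a finitely generated (resp.\ countable) group $B$ with $\depth(B)=\alpha$ and write $\alpha=\omega\cdot\epsilon$; the case $\alpha=\omega\cdot\epsilon+1$ reduces to this one exactly as at the end of the proof of Proposition~\ref{p:omega+1}, since $\beta\ge\omega$ forces $\alpha+\beta+1=\omega\cdot\epsilon+\beta+1$.

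Next I would form $E$, the quotient of $A\wr B$ by the relations $(b^{-1}zb=z\mid b\in B,\ z\in Z)$ that identify the copy $Z_{b}$ of $Z$ inside each summand $A_{b}\le P_{A,B}$ to a single central copy of $Z$, and let $q\colon E\onto\overline{A}\wr B$ be the resulting central extension with kernel $Z$. Then $E$ is finitely generated (resp.\ countable), the preimage of each summand $\overline{A}_{b}$ of $P_{\overline{A},B}$ is a copy $A_{b}\le E$ of $A$ with $\core^{\delta}(A_{b})=Z$, and $\widetilde{P}:=q^{-1}(P_{\overline{A},B})$ contains all the $A_{b}$. The crux of the proof is the identification $\core^{\epsilon}(E)=\widetilde{P}$. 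Because $\overline{A}$ is perfect and $\core^{\gamma}(B)$ is infinite for $\gamma<\epsilon$, Corollary~\ref{c:wr} gives $\core^{\gamma}(\overline{A}\wr B)=P_{\overline{A},B}\rtimes\core^{\gamma}(B)\supseteq P_{\overline{A},B}$ for all $\gamma\le\epsilon$, with $\core^{\epsilon}(\overline{A}\wr B)=P_{\overline{A},B}$ since $\core^{\epsilon}(B)=1$. One then argues by transfinite induction on $\gamma\le\epsilon$ that $Z\le\core^{\gamma}(E)$ and $\core^{\gamma}(E)=q^{-1}(\core^{\gamma}(\overline{A}\wr B))$: the inclusion $\subseteq$ is immediate from the last lemma of Section~\ref{s:1}; at a successor $\gamma=\gamma_{0}+1$ the inductive hypothesis gives $A_{b}\le\widetilde{P}=q^{-1}(P_{\overline{A},B})\le q^{-1}(\core^{\gamma_{0}}(\overline{A}\wr B))=\core^{\gamma_{0}}(E)$, so every finite quotient of $\core^{\gamma_{0}}(E)$ restricts to a finite quotient of $A_{b}$ and hence kills $\core(A_{b})\supseteq\core^{\delta}(A_{b})=Z$; this forces $Z\le\core(\core^{\gamma_{0}}(E))=\core^{\gamma_{0}+1}(E)$, after which the equality propagates because $Z=\ker q$. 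Limit stages are immediate since $q^{-1}$ commutes with intersections. Setting $\gamma=\epsilon$ yields $\core^{\epsilon}(E)=q^{-1}(P_{\overline{A},B})=\widetilde{P}$.

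To conclude I would compute $\core^{\delta}(\widetilde{P})$: Lemma~\ref{l:dir-prod} gives $\core^{\delta}(P_{\overline{A},B})=\bigoplus_{b}\core^{\delta}(\overline{A})=1$, so $\core^{\delta}(\widetilde{P})\le q^{-1}(1)=Z$, while $A_{b}\le\widetilde{P}$ gives $Z=\core^{\delta}(A_{b})\le\core^{\delta}(\widetilde{P})$; hence $\core^{\delta}(\widetilde{P})=Z$ is finite and non-trivial. Therefore $\core^{\epsilon+\delta}(E)=\core^{\delta}(\core^{\epsilon}(E))=\core^{\delta}(\widetilde{P})=Z$ (core-iteration composes under ordinal addition), and Lemma~\ref{l:cores-to-depth}(3) gives $\depth(E)=\omega\cdot(\epsilon+\delta)+1=\alpha+\beta+1$, as wanted. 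The one place I expect real difficulty rather than routine bookkeeping is the perfectness of $A$: Corollary~\ref{c:wr}, and behind it Lemma~\ref{l:wr}, needs $\overline{A}$ to be perfect in order that $\core^{\gamma}(\overline{A}\wr B)$ contain all of $P_{\overline{A},B}$, and without this the identification $\core^{\epsilon}(E)=\widetilde{P}$ breaks down; so the construction must be carried out with $A$ perfect, exactly as in Proposition~\ref{p:omega+1}.
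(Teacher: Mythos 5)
Your proposal is correct and is essentially the paper's own argument: the paper proves Proposition~\ref{p:beta+1} only by remarking that one should ``argue in the same manner'' as in Proposition~\ref{p:omega+1} with $Z=\core(A)$ replaced by the deeper central core, and your write-up is a faithful and careful execution of exactly that plan, including the correct bookkeeping $\core^{\epsilon+\delta}(E)=\core^{\delta}(\core^{\epsilon}(E))$ and the ordinal identity $\omega\cdot(\epsilon+\delta)=\alpha+\beta$. Your observation that perfectness of $A$ (hence of $\overline{A}$) must be added to the stated hypotheses is well taken: Lemma~\ref{l:wr} and Corollary~\ref{c:wr} genuinely need it to force $P_{\overline{A},B}$ into $\core^{\epsilon}(\overline{A}\wr B)$ (the lamplighter group shows this fails otherwise), and the paper's statement omits this assumption even though its intended argument relies on it.
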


By applying the proposition to the groups from Theorem \ref{t1}, we obtain the following result, which will
be subsumed into Theorem \ref{t3}.

\begin{corollary}
If $\alpha$  is the residual finiteness depth of a finitely generated (resp.~countable) group, then  so
is $\alpha + \omega\cdot n + 1$, for all positive $n\in\mathbb{N}$.
\end{corollary}

\section{Countable limit ordinals as residual finiteness depths}\label{s:4}%\label{s:any-alpha}

The purpose of this section is to prove the following result.

\begin{theorem}\label{t:fg-limit} For every countable limit ordinal $\alpha$, 
there exists  a finitely generated group $\G$ with $\depthRF(\G) =\alpha$.
\end{theorem}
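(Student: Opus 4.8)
The plan is to embed the countable groups $G_\alpha$ produced by Proposition \ref{t:cbl} into finitely generated groups without changing the residual finiteness depth. The natural tool is a Higman--Neumann--Neumann style embedding, but the ordinary HNN/amalgam embeddings of a countable group into a $2$-generated group are delicate with respect to finite quotients, so some care is needed. The key observation, as in Section \ref{s:2}, is that free products interact very cleanly with cores (Lemma \ref{l:free-prod}): if I can realise $\alpha$ by a \emph{countable free product} $G = \bigast_i A_i$ in which each factor $A_i$ is \emph{finitely generated} and all the $A_i$ lie in a uniformly controlled family, then I can hope to fold the infinitely many factors together into a single finitely generated group using a ``mapping torus'' / wreath-type trick while tracking cores through the Bass--Serre tree.

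Concretely, first I would revisit the construction in the proof of Proposition \ref{t:cbl} and arrange that $G_\alpha$ is built only from \emph{finitely generated} building blocks: at successor-of-the-form-$\alpha_0+\omega$ stages we used $A \wr G_{\alpha_0}$ with $A$ a finite perfect group, and at genuine-supremum stages we used a free product (or direct sum) of the $G_\beta$ for $\beta<\alpha$ cofinal in $\alpha$. Choosing the free product at every supremum stage, and choosing a \emph{fixed} cofinal $\omega$-sequence $\beta_1<\beta_2<\cdots$ (possible since $\alpha$ is a countable limit ordinal, hence of countable cofinality $\omega$ unless $\alpha=\omega$, which is trivial), I get $G_\alpha = \bigast_{k} G_{\beta_k}$. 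So by transfinite induction I may assume each $G_{\beta_k}$ is already finitely generated with $\depth(G_{\beta_k})=\beta_k$, and I am reduced to the following statement: given finitely generated groups $H_1, H_2, \dots$ with $\depth(H_k)=\beta_k$ increasing with limit $\alpha$, there is a \emph{finitely generated} group $\G$ with $\depth(\G)=\alpha$.

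For that reduction I would use an amalgam of a free product of the $H_k$ with a single finitely generated group that ``scans'' the factors: for instance, form $\G = \big(\bigast_k H_k\big) \ast_{\;?\;} L$ where $L$ is a suitable finitely generated group (e.g.\ built from $F_2$ or a lamplighter-type group) chosen so that $\G$ is generated by finitely many elements together with a single conjugating element that moves a fixed generator of $H_k$ to one of $H_{k+1}$. The point is to keep every $H_k$ a retract-like or at least core-controllable subgroup: since $\depth(\bigast_k H_k) = \sup_k \beta_k = \alpha$ by Lemma \ref{l:free-prod}(3), I need the amalgamation with $L$ not to decrease the core-filtration below $\bigast_k H_k$, and not to increase the depth past $\alpha$. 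Controlling this is exactly the role played by Lemma \ref{l:tree} and the Bass--Serre analysis in Section \ref{s:2}: I would choose the edge groups and $L$ so that, after passing to $\core(\G)$, the induced graph of groups has underlying tree with vertex groups the conjugates of the $\core(H_k)$ plus residually finite ``glue,'' and then iterate transfinitely as in the proof of Lemma \ref{l:free-prod}(2) to get $\core^\gamma(\G)$ for all $\gamma$.

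The main obstacle will be the last point: ensuring that the finitely-generating ``glue'' group $L$ and the edge inclusions do not create elements that die in \emph{some} finite quotient prematurely (which would lower the depth) nor survive spuriously (raising it). This is a genuine Bass--Serre bookkeeping problem, because unlike in Section \ref{s:2} the relevant subgroups need not be retracts and the graphs $|\widetilde{\GG}/\!/\G_i|$ need not be trees at intermediate finite-index stages (cf.\ the Remark after Lemma \ref{l:tree}). I expect the resolution to mirror the proof of Theorem \ref{t:2.2}: arrange that every finite quotient of $\G$ factors through a free product of residually finite groups, identify $\core(\G)$ as the corresponding Bass--Serre kernel, show it is again a free product of controlled pieces (the $\core(H_k)$ together with free groups), and then run the transfinite induction of Lemma \ref{l:free-prod}(2) to conclude $\core^\gamma(\G)$ is $1$ exactly at $\gamma$ with $\omega\cdot\gamma=\alpha$, hence $\depth(\G)=\alpha$ by Lemma \ref{l:cores-to-depth}.
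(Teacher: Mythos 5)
Your overall strategy -- realise $\alpha$ by the countable group of Proposition \ref{t:cbl} and then pass to a finitely generated group by a Bass--Serre-controlled embedding -- is the right one, and the case split at $\omega^2$ (with Theorem \ref{t:2.2} handling $\alpha<\omega^2$) matches the paper. But the proposal has a genuine gap exactly where you flag ``the main obstacle'': the group $L$ and the amalgamated subgroup in $\big(\bigast_k H_k\big)\ast_{?}L$ are never specified, and no mechanism is given for why the resulting depth is still $\alpha$ rather than something larger. The missing idea is an ordinal-arithmetic one: for a limit ordinal $\alpha\ge\omega^2$ one has $\omega+\alpha=\alpha$, so one can afford an embedding that adds a whole extra ``$\omega$-layer'' at the bottom of the core filtration, provided each new layer is residually finite and $\core$ of the enlarged group decomposes as a free product of a free group with copies of (the trace of the core on) the old group. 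Concretely, the paper takes the arbitrary countable $\G$ with $\depth(\G)=\alpha$, forms $\G_0=\G\ast\Z$ and sets $a_i=zg_i$ for an enumeration $\{g_i\}$ of $\G$ (so every $a_i$ has infinite order and $\<a_0\>=\<z\>$ is a retract), then takes the HNN extension $\G_1$ with infinitely many stable letters $t_i$ conjugating $a_i$ to $a_0$, and finally amalgamates $\G_1$ with $F_2$ along the free group $T=\<t_1,t_2,\dots\>$ identified with an infinite-rank subgroup of $F_2$. At each stage the group retracts onto a residually finite piece ($\<a_0\>$, then $T$, then $F_2$), so the edge groups miss the core, Lemma \ref{l:tree} applies, and $\depth=\omega+\depth(\core)\le\omega+\alpha=\alpha$; the reverse inequality is free from monotonicity since $\G$ embeds. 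Without the identity $\omega+\alpha=\alpha$ your worry about ``increasing the depth past $\alpha$'' has no resolution, and your worry about lowering the depth is a non-issue (monotonicity of depth under subgroups gives the lower bound for free once $\G$ embeds).

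A secondary point: your reduction to the case where $G_\alpha$ is a free product of \emph{finitely generated} factors $H_k$ along a cofinal $\omega$-sequence is unnecessary and does not actually simplify the problem -- folding countably many finitely generated factors into one finitely generated group is essentially the same difficulty as embedding an arbitrary countable group, which is what the paper does directly (Proposition \ref{p:3gen} produces a $3$-generator overgroup of any countable $\G$ with limit depth $\ge\omega^2$, with the same depth). Likewise, your suggested ``scanning'' element conjugating a generator of $H_k$ into $H_{k+1}$ would entangle the factors in a way that is hard to control; the paper instead conjugates all the twisted elements $zg_i$ to the single retractable element $z$, which is what makes the core computation go through.
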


Our proof divides into the case $\alpha < \omega^2$, which is covered by Theorem \ref{t:2.2},
and the case $\alpha \ge \omega^2$. A key technical point in the second case is that
if $\depth(G)=\alpha$ then $\depth(G) = \depth (\core(G))$, because
$\omega + \alpha = \alpha$.

\subsection{Limit ordinals $\alpha\ge\omega^2$} In this regime, Theorem \ref{t:fg-limit}
is an immediate consequence of Proposition \ref{t:cbl} and the following controlled-embedding lemma. 

\begin{proposition}\label{p:3gen} Let $\G$  be a countable group and suppose that
$\depth(\G)\ge \omega^2$ is a limit ordinal. Then, there exists a 3-generator group $\G^+$ and
an embedding $\G\hookrightarrow\G^+$ such that   $\depth(\G) = \depth(\G^+)$.
\end{proposition}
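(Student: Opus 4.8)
The plan is to embed $\G$ by the Higman--Neumann--Neumann construction, arranged so that the residual core of the ambient group is manifestly a free product of copies of $\core(\G)$, after which Lemma~\ref{l:free-prod} and the hypothesis $\alpha\ge\omega^2$ finish the job. First I would make a harmless reduction: replacing $\G$ by $\G\ast F$ with $F$ free of countably infinite rank changes nothing, since $\depth(\G\ast F)=\depth(\G)$ by Lemma~\ref{l:free-prod}(3) (here $\depth(\G)=\alpha\ge\omega^2$ is a limit ordinal and $F$ is residually finite). So I may assume $\G$ carries a generating set $\{a_1,a_2,\dots\}$ of pairwise-distinct infinite-order elements together with an epimorphism $h\colon\G\onto\Z$ such that $h(a_i)=1$ for all $i$; in particular each $\<a_i\>\cong\Z$ is a retract of $\G$ (this is the device of property~(3) in Section~\ref{s:2}). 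Put $A=\G\ast\<x\>$ with $x$ of infinite order, so $\depth(A)=\alpha$ too, and set $u_i=x^{-i}a_1x^i$. Then $U:=\<u_i\mid i\ge1\>\le\<x,a_1\>\cong F_2$ is free on $\{u_i\}$, and --- because no $a_j$ is trivial --- $V:=\<u_ia_{i+1}\mid i\ge1\>$ is free on $\{u_ia_{i+1}\}$. Let $\G^+$ be the HNN extension of $A$ with stable letter $t$ and associated subgroups $U$, $V$ identified by $u_i\mapsto u_ia_{i+1}$. Then $A$, and hence $\G$, embeds in $\G^+$; and since $a_{i+1}=u_i^{-1}t^{-1}u_it$ for every $i\ge1$ we get $\G^+=\<x,a_1,t\>$, a $3$-generator group. (This is the classical countable-into-finitely-generated embedding; keeping the free letter $x$ rather than absorbing it into $\G$ is what costs the third generator, and it is this extra room that lets us control depth.)

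As $\G\le\G^+$ we have $\depth(\G)\le\depth(\G^+)$, so it remains to prove $\depth(\G^+)\le\alpha$. Killing $\G$ maps $\G^+$ onto $\<\bar x,\bar t\>\cong F_2$ (every defining relation of $A$ dies, and each HNN relation $t^{-1}u_it=u_ia_{i+1}$ becomes trivial because $u_i$ maps to $\bar x^{-i}\bar x^i=1$); since $F_2$ is residually finite, $\core(\G^+)\le N:=\ker(\G^+\onto F_2)=\<\!\<\G\>\!\>^{\G^+}$. Because $\depth(\G^+)\ge\omega^2$, we have $\depth(\G^+)=\depth(\core(\G^+))\le\depth(N)$ (as observed at the start of Section~\ref{s:4}, together with monotonicity of depth under subgroups), so it suffices to show $\depth(N)\le\alpha$. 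Now $N$ is generated by conjugates of $\G\le A$, hence by elliptic elements for the action on the Bass--Serre tree of the HNN splitting, so $N$ is the fundamental group of a tree of groups; arguing as in Lemma~\ref{l:free-prod}(2), its vertex groups are conjugates of $W:=\ker(A\onto\<x\>)=\bigast_{n\in\Z}x^{-n}\G x^n$ (a free product of countably many copies of $\G$), and its edge groups are conjugates of $U$. The structural point I will use is that inside $W$ the subgroup $U$ is the free product of the retract-cyclic subgroups $\<x^{-i}a_1x^i\>$, one from each free factor $x^{-i}\G x^i$ ($i\ge1$), and likewise $V$ is free on the elements $x^{-i}a_1x^i\cdot a_{i+1}$, each of which lies in the free product of two of the factors $x^{-n}\G x^n$.

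The heart of the matter is the claim $\core(N)\cap W=\core(W)$ for each vertex group $W$. The inclusion $\supseteq$ is monotonicity; for $\subseteq$ one must show each $w\in W\ssm\core(W)$ survives in a finite quotient of $N$. Pick a finite quotient $q\colon W\onto\bar Q$ with $q(w)\neq1$ that collapses all but finitely many free factors of $W$, collapsing each surviving factor $x^{-n}\G x^n$ through its retraction onto $\<x^{-n}a_1x^n\>$ where convenient. Then $q$ extends across every edge of the tree of groups while keeping the same target: whenever one reaches a fresh copy of $W$ across an edge group included either as $U=\<\alpha_i\>$ or as $V=\<\alpha_ia_{i+1}\>$ (writing $\alpha_i=x^{-i}a_1x^i$), one defines the map on the factor $x^{-i}\G x^i$ by $x^{-i}\G x^i\xrightarrow{h}\<\alpha_i\>\to\bar Q$ (sending $\alpha_i$ to the prescribed image of $\alpha_i$, respectively of $\alpha_ia_{i+1}$) and sends every other factor of $W$ to $1$; since $\bar Q$ is fixed and the underlying graph is a tree, this yields a finite quotient $\hat q\colon N\onto\bar Q$ with $\hat q(w)=q(w)\neq1$. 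Because also $\core(W)\cap U=1$ (a non-trivial element of the free group $U$ has $W$-normal form whose syllables are non-trivial powers of the $\alpha_i$, none lying in $\core(x^{-i}\G x^i)$ since $\<\alpha_i\>$ is a retract), it follows that $\core(N)$ acts on the Bass--Serre tree with trivial edge-stabilisers; so $\core(N)$ is the free product of its vertex-stabilisers --- conjugates of $\core(W)$, which by Lemma~\ref{l:free-prod}(2) is a free product of copies of the $\core(x^{-n}\G x^n)$ --- with a free group. Thus $\core(N)$ is a free product of copies of $\core(\G)$ and copies of $\Z$, and Lemma~\ref{l:free-prod}(2) applied to this free product shows that $\core^\gamma(N)$ is a free product of copies of $\core^\gamma(\G)$ for every ordinal $\gamma\ge2$. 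Writing $\alpha=\omega\cdot\delta$, Lemma~\ref{l:cores-to-depth}(1) gives $\core^\delta(\G)=1$, hence $\core^\delta(N)=1$, i.e.\ $\depth(N)\le\omega\cdot\delta=\alpha$. Combining, $\depth(\G^+)=\alpha=\depth(\G)$.

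The step I expect to be the real work is the identity $\core(N)\cap W=\core(W)$ --- equivalently, the assertion that finite quotients of a vertex group of this tree of groups can be pushed forward to finite quotients of all of $N$ without enlarging the target. A tree of groups with non-trivial edge groups need not have its profinite behaviour governed by that of its vertex and edge groups, and the extension argument above genuinely relies on having arranged that each edge group sits in a neighbouring vertex group as a product of retract-cyclic subgroups --- exactly the role of the homomorphism $h$ and of the chosen generating set of $\G$. Once $\core(N)$ has been recognised as a free product, the remainder is the routine free-product bookkeeping already used in the proofs of Lemma~\ref{l:free-prod} and Theorem~\ref{t:2.2}.
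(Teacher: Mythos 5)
Your argument is correct, but it takes a genuinely different route from the paper's. You use the classical one-stable-letter HNN embedding, so your single edge group $U$ is free of infinite rank and is \emph{not} a retract of $\G^+$ (indeed $h$ cannot extend over the relation $t^{-1}u_it=u_ia_{i+1}$); this forces you to descend to $N=\ker(\G^+\onto F_2)$ and prove the identity $\core(N)\cap W=\core(W)$ by pushing finite quotients of a vertex group outward through the tree of groups, using that each edge group sits in each adjacent vertex group as a free product of retract-cyclic factors. The paper instead uses infinitely many stable letters $t_i$ conjugating $a_i$ to $a_0$, so that every edge group is cyclic and is a retract of the \emph{whole} group $\G_1$ (via $a_i\mapsto 1\in\Z$, $t_i\mapsto 0$); the core then meets every edge stabiliser trivially for free, $\core(\G_1)$ is immediately a free product of a free group and copies of $\G_0\cap\core(\G_1)$, and the depth bound needs only the monotonicity estimate $\depth(\G_0\cap\core(\G_1))\le\depth(\G_0)$ rather than an exact computation of the intersection. (A second amalgamation $\G_1\ast_{T=L}F_2$ along the retract $T$ then cuts the generators down to three.) So the paper trades your delicate extension-of-quotients step for a more elaborate splitting in which all edge groups are global retracts; your version buys the sharper conclusion $\core(N)\cap W=\core(W)$, which is more than is strictly needed, but in your setup some such argument is unavoidable because triviality of $\core(N)\cap U$ does not come for free. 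Two small points to tighten: your justification of $\core(W)\cap U=1$ should invoke either the retraction $W\onto U$ (so $\core(W)\le\ker(W\onto U)$) or the elliptic-generation criterion from the proof of Lemma \ref{l:free-prod}(2) (no element of $\core(W)$ has a normal form all of whose syllables lie outside the factor cores), rather than the bare statement about syllables; and the step $\depth(\G^+)=\depth(\core(\G^+))$ is really $\depth(\G^+)\le\omega+\depth(\core(\G^+))$ combined with $\omega+\alpha=\alpha$, exactly as at the start of Section \ref{s:4}.
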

  
At several points in the following proof we shall use the obvious fact that 
$H<K $ implies $\depth(H)\le \depth(K)$.

\begin{proof} 
We list the elements of $\G$ without repetition, $\G=\{g_0=1,g_1,g_2,\dots\}$. 
Let $\G_0=\G\ast\Z$, fix a generator $z$ for $\Z$,
and for each $i\in\N$ define $a_i=zg_i$.  Lemma \ref{l:free-prod} assures us that $\depth(\G) = \depth(\G_0)$.
Note that each $a_i$ has infinite order in $\G_0$. Let $\G_1$ be the HNN extension
of $\G_0$ with infinitely many stable letters $t_i \ (i\ge 1)$, where $t_i$ conjugates $a_i$ to $a_0$:
\[
\G_1 = (\G_0,\, t_1,t_2,\dots \mid t_i^{-1}a_it_i=a_0\ \text{for }i\ge 1).
\]
Note that $\G_1$ is generated by $\{a_0, t_1, t_2,\dots \}$ and that the $t_i$ freely generate a free group 
$T$ onto which $\G_1$ retracts with kernel $\<\!\<a_0\>\!\>$.  We claim that $\core(\G_1)$
is the free product of a free group and copies of $\G_0\cap \core(\G_1)$. Then, as the depth of 
$\G_0\cap \core(\G_1)$ is at most $\depth(\G_0)$, using Lemma \ref{l:free-prod} we have   
\begin{equation*}
\begin{split} 
\depth(\G_1) & = \omega + \depth(\core(\G_1))\\
& \le  \omega + \depth(\G_0) = \omega + \depth(\G) = \depth(\G),
\end{split}
\end{equation*}
where the last equality holds because $\depth(\G) \ge \omega^2$.  As $\G<\G_1$,
we conclude that  $\depth(\G_1) = \depth(\G)$.

To prove the claim, consider the graph-of-groups decomposition $\GG$ of $\G_1$ corresponding to its HNN structure. Each
edge group is a conjugate of $\<a_0\>$, so it  intersects $\core(\G_1)$ trivially because killing all of 
the stable letters $t_i$ retracts $\G_1$  onto $\<a_0\>$. Therefore, the covering of $\GG$ with fundamental group
$\core(\G_1)$ has trivial edge groups and has vertex groups that are conjugates of $\G_0\cap \core(\G_1)$. 

For the next step in the construction,
we choose a free group of infinite rank $L$ in $F_2 = {\textrm{Free}}(b,c)$ and identify this
with $T$ to form the amalgamated free product
\[
 \G^+ := \G_1\ast_{T=L} F_2.
 \]
Note that $\G^+$ is generated by $\{a_0, b, c\}$. The retraction of $\G_1$ onto $T$ extends to a retraction of $\G^+$
onto $F_2$, which is residually finite, so $\core(\G^+)$ intersects each conjugate of $F_2$ trivially, and hence is the
fundamental group of a graph of groups with trivial edge groups and vertex groups that are either trivial or conjugates
of $\core(\G^+)\cap \G_1$. Thus $\core(\G^+)$ is a free product of a free group and copies of $\core(\G^+)\cap \G_1$.
As above, this implies    
\[
\depth(\G^+) = \omega +  \depth(\core(\G^+)) \le  \omega + \depth(\G_1) = \depth(\G_1).
\]
Hence $\depth(\G^+) = \depth(\G_1) = \depth(\G)$.
\end{proof}

\section{Finitely presented groups realising the ordinals $\omega \cdot n +1$}\label{s:5}
In  \cite[Question 5.4]{BJ}, Brody and Jankiewicz  ask if there exist finitely presented groups $M_n$ with $\depthRF(M_n) =\omega\cdot n +1$. In this section we will see that the proof of Theorem \ref{t1}
can be modified to arrange this. 
For this construction, we need a central extension 
of a residually finite group that has finite centre and
is not residually finite.  We will use the group $\DL$  from (\ref{e:d}) with $d$ odd and $g\ge 3$ fixed.

\begin{theorem}\label{t3+1} 
For every positive integer $n$ and odd integer $d\ge 3$,  there exists a finitely presented group $M_n$
with centre $Z(M_n)\cong\Z/d\Z$  such that $\depth(M_n) =\omega\cdot n +1$.
\end{theorem}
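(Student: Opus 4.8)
The plan is to run the inductive construction of Theorem~\ref{t:2.2} essentially verbatim, but seeded with a group that carries a central $\Z/d\Z$ rather than a central $\Z$. More precisely, I would define inductively finitely presented groups $M_n$ with the following properties: (1) $M_n$ is finitely presented; (2) $\depth(M_n)=\omega\cdot n+1$; (3) $Z(M_n)\cong\Z/d\Z$, with $Z(M_n)$ equal to $\core^n(M_n)$; and (4) $M_n$ has a generating set $\{a_0,\dots,a_m\}$ admitting an epimorphism $h_n:M_n\to\Z$ with $h_n(a_i)=h_n(a_0)$ for all $i$ (so that, as in Remark~\ref{r:BE}, each cyclic subgroup $\<a_i\>$ is a retract). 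For the base case I would take $M_1^0=\DL$, the Deligne-type group from~(\ref{e:d}): it is finitely presented, perfect, has $Z(\DL)=\core(\DL)\cong\Z/d\Z$, and $\DL/Z(\DL)\cong\mathrm{Sp}(2g,\Z)$ is residually finite, so $\depth(\DL)=\omega+1=\omega\cdot 1+1$. Then $M_1=\DL\ast\Z$; by Lemma~\ref{l:free-prod}(4) (with $\sup_i\depth(A_i)=\omega+1$ a successor) we get $\depth(M_1)=\omega+1+\omega=\omega\cdot 2+1$... so in fact the free product bumps the depth, which means I must be slightly more careful than in Theorem~\ref{t:2.2}: I should not free-product with $\Z$ at each stage, but instead obtain condition (4) in a way that does not change the depth, e.g.\ by the trick of Lemma~\ref{l2} applied to a group that is already ``rigid'' at the top, or simply by choosing generators of $M_n^0$ directly and composing $h_n$ with the abelianisation; I would reorganise the bookkeeping so that property (4) is maintained internally.

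The inductive step then mimics the proof of Theorem~\ref{t:2.2} exactly. Given $M_n$ satisfying (1)--(4) with generators $a_0,\dots,a_m$, form $M_{n+1}^0$ by amalgamating $m+1$ copies $\La_i$ of Deligne's group $\La$ from~(\ref{e1}) along $\core(\La_i)\cong\Z$ with the cyclic subgroups $\<a_i\><M_n$ (identifying a generator $\zeta_i$ of $\core(\La_i)$ with $a_i$). As in Theorem~\ref{t:2.2}: every finite quotient kills each $\zeta_i$, hence kills all of $M_n$, so every finite quotient of $M_{n+1}^0$ factors through $\rho:M_{n+1}^0\onto\bigast_i \La_i/\<\zeta_i\>$, a free product of copies of the residually finite group $\La/\<\zeta\>$; therefore $\core(M_{n+1}^0)=\ker\rho$, which by Lemma~\ref{l:tree} is the fundamental group of a \emph{tree} of groups $\T$ whose vertex groups are conjugates of $M_n$ and infinite cyclic groups $\<\zeta_i\>$, with cyclic edge groups. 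The map $h_n$ and the identity on the cyclic vertex groups glue to an epimorphism $h_{n+1}:\core(M_{n+1}^0)\onto\Z$ restricting to an isomorphism on each edge group and each cyclic vertex group, exactly as before; consequently $\core^2(M_{n+1}^0)$ is a free product of a free group and conjugates of $I_n:=M_n\cap\core^2(M_{n+1}^0)$. The key claim, proved as in Theorem~\ref{t:2.2} by extending any finite quotient $p_\gamma:M_n\to Q$ over $\T$ component-by-component using $h_{n+1}$ composed with $\Z\to\Z/e_i\Z$, is that $I_n=\core(M_n)$. So $\core^2(M_{n+1}^0)$ is a free product of infinitely many copies of $\core(M_n)$ and a free group; applying Lemma~\ref{l:free-prod} and the inductive hypothesis $\depth(M_n)=\omega\cdot n+1$ gives $\depth(\core^2(M_{n+1}^0))=\depth(\core(M_n))=\omega\cdot(n-1)+1$ (a limit-plus-one, so Lemma~\ref{l:free-prod}(4) does not apply and the free product preserves it — here one uses that $\sup=\omega\cdot(n-1)+1$ is a successor and there are infinitely many factors, giving $\depth=\omega\cdot(n-1)+1+\omega=\omega\cdot n+1$; I will double-check this arithmetic, since it is the subtle point). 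Chaining up, $\depth(M_{n+1}^0)=\omega\cdot 2+\depth(\core^2(M_{n+1}^0))=\omega\cdot(n+1)+1$, as wanted. The centre: since $\DL$ and hence $\core^n(M_n)=Z(M_n)\cong\Z/d\Z$ is the only source of ``finite core at the bottom'', and the amalgamations and free products introduce only cyclic and free factors above it, a transfinite-core bookkeeping (parallel to Addendum~\ref{ad1}) shows $\core^{n+1}(M_{n+1})$ is a free product of a free group and conjugates of $\core(M_1)=Z(\DL)$, and in fact $Z(M_{n+1})\cong\Z/d\Z$; I will need to verify the centre is genuinely the full centre (not just contained in it), which follows because the amalgamated product structure forces central elements to lie in the amalgamating cyclic subgroups and then in the bottom $\DL$.

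The main obstacle I anticipate is the ordinal arithmetic at the free-product stage: unlike the $\omega\cdot n$ case, here the relevant suprema are successor ordinals $\omega\cdot k+1$, and Lemma~\ref{l:free-prod}(4) says an infinite free product of such groups has depth $\omega\cdot k+1+\omega=\omega\cdot(k+1)+1$, which \emph{shifts} the index. This means the naive ``amalgamate $m+1$ copies of $\La$'' step, which in Theorem~\ref{t:2.2} took $\omega\cdot n\mapsto\omega\cdot(n+1)$, might here overshoot; I will have to check that the two competing effects (the $+\omega$ from adding a layer of $\La$'s, and the $+\omega$ from the infinite free product inside $\core^2$) combine to give exactly $\omega\cdot(n+1)+1$ and not $\omega\cdot(n+2)+1$. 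The resolution should be the same as in Theorem~\ref{t:2.2}: the $+\omega$ from the free product is precisely the ``$\core^2$ versus $\core^1$'' drop that is already accounted for, so $\depth(M_{n+1}^0)=\omega\cdot 2+\bigl(\omega\cdot(n-1)+1\bigr)=\omega\cdot(n+1)+1$. Making this completely rigorous — rather than hand-wavy — is where I would spend the most care, together with confirming that $Z(M_n)$ is exactly $\Z/d\Z$ and survives undistorted through all the amalgamations. Everything else is a direct transcription of the proof of Theorem~\ref{t:2.2} with $\La$ replaced by $\DL$ at the seed.
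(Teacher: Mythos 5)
Your approach has a genuine gap, and it sits exactly at the point you flagged as ``the subtle point'': the ordinal arithmetic at the free-product stage does not work out, and it cannot be made to work out by more care. If $\depth(M_n)=\omega\cdot n+1$, then $\depth(\core(M_n))=\omega\cdot(n-1)+1$, a successor ordinal, and your $\core^2(M_{n+1}^0)$ is a free product of \emph{infinitely many} conjugates of $\core(M_n)$ together with a free group. Lemma \ref{l:free-prod}(4) then gives
\[
\depth\bigl(\core^2(M_{n+1}^0)\bigr)=\omega\cdot(n-1)+1+\omega=\omega\cdot n ,
\]
because $1+\omega=\omega$: the $+1$ is absorbed. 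Concretely, $\core^{n+1}(M_{n+1}^0)$ is a free product of infinitely many copies of the finite group $\Z/d\Z$, which is an \emph{infinite residually finite} group rather than a non-trivial finite one, so $\depth(M_{n+1}^0)=\omega\cdot(n+2)$, a limit ordinal --- not $\omega\cdot(n+1)+1$. The resolution is \emph{not} ``the same as in Theorem \ref{t:2.2}'': there the relevant supremum is the limit ordinal $\omega\cdot(n-1)$, case (3) of Lemma \ref{l:free-prod} applies and no shift occurs, whereas here the shift is real and destroys the successor. (Your base-case computation already signals this: $\omega+1+\omega=\omega\cdot 2$, not $\omega\cdot 2+1$.) A second, related failure is the centre: once you amalgamate copies of $\La$ onto $M_n$ along the infinite cyclic subgroups $\<a_i\>$, nothing in the new vertex groups centralises the $\Z/d\Z$ sitting inside the seed copy of $\DL$, so $Z(M_n)$ is trivial for $n\ge 2$ and condition (3) of your induction collapses.

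The paper avoids both problems by inverting the order of construction. It first builds the tower $\G_n$ of Theorem \ref{t:2.2} with $\depth(\G_n)=\omega\cdot n$, seeded with $\G_1^0={\textrm{Sp}}(2g,\Z)$ and with $\G_2$ as in Remark \ref{r:better2}, so that by Addendum \ref{ad1} the subgroup $\core^{n-1}(\G_n)$ is residually finite and contains conjugates of ${\textrm{Sp}}(2g,\Z)$. It then lifts the \emph{entire} tower to a central extension $1\to\Z/d\Z\to M_n\to\G_n\to 1$ by a single shared copy of $\Z/d\Z$, restricting to Deligne's extension (\ref{e:d}) over one ${\textrm{Sp}}(2g,\Z)$ vertex and to the trivial extension $(\Z/d\Z)\times\La$ over every attached copy of $\La$, with all amalgamations taken along $(\Z/d\Z)\times\Z$ (and the free product with $\Z$ replaced by an amalgamation with $(\Z/d\Z)\times\Z$ over the centre). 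Because there is only ever one copy of $\Z/d\Z$, and it is central, $\core^{n}(M_n)$ is that single finite group rather than an infinite free product of finite groups; this is what yields $\depth(M_n)=\omega\cdot n+1$ and $Z(M_n)\cong\Z/d\Z$. To salvage your bottom-up scheme you would have to perform the same identification --- replace each new $\La_i$ by $(\Z/d\Z)\times\La_i$ and amalgamate over $(\Z/d\Z)\times\Z$ so that the finite centre is never duplicated --- at which point you have reconstructed the paper's argument.
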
 

\begin{proof} 
We follow the construction of $\G_n$ in Theorem \ref{t:2.2} with a more careful choice of $\G_1$ and $\G_2$.
Specifically,  we
take $\G_1^0 = {\textrm{Sp}}(2g,\Z)$,  for some fixed $g\ge 2$, then $\G_1=\G_1^0\ast\Z$; then we build $\G_2$ as described in 
Remark \ref{r:better2}. 
In the inductive step we define $\G_{n+1}$ with $\depth(\G_{n+1})=\omega\cdot (n+1)$ by 
amalgamating $\G_{n}$ with $m+1$ copies of $\La$, as in the proof of Theorem \ref{t:2.2}, to obtain $\G_{n+1}^0$
then take $\G_{n+1}=\G_{n+1}^0\ast\Z$. This family of groups comes with embeddings $\G_n\hookrightarrow\G_{n+1}$.
Addendum \ref{ad1} tells us that $\core^{n-1}(\G_n)$ is a free product of a free group and conjugates of $\core(\G_2)$,
and in Remark \ref{r:better2} we saw that $\core(\G_2)$ is a residually finite group that contains conjugates of
$\G_1$. Thus $\core^{n-1}(\G_n)$ is a residually finite group that contains conjugates of $\G_1$.

With this structure in hand, we will be done if we can describe a central  extension 
\[
1\to \Z/d\Z \to M_n \overset{p}\to \G_n\to 1
\]
that restricts over ${\textrm{Sp}}(2g,\Z)= \G_1^0 < \G_n$ to Deligne's extension  (\ref{e:d}), because this will tell us that
$\core^{n-1}(M_n)$ is a central extension
\[
1\to \Z/d\Z \to \core^{n-1}(M_n) \overset{p}\to \core^{n-1}(\G_n)\to 1
\]
with $\core^{n-1}(\G_n)$ residually finite and $\Z/d\Z = \core(p^{-1}(\G_1^0)) = \core^{n}(M_n)$. 

To this end, we define $M_1$ by amalgamating the group ${\DL}$
from  (\ref{e:d}) with $(\Z/d)\times\Z$, identifying the central subgroup $\Z/d\Z<\DL$ with
$(\Z/d)\times\{1\}$.  
Then, 
proceeding by induction, assuming that the desired central extension $M_{n}$ has been constructed 
over $\G_{n}$, 
we define $M_{n+1}^0$ to be the central extension of $\G_{n+1}^0$ that restricts to $M_{n}$
over $\G_{n}$ and restricts to the trivial extension $(\Z/d\Z)\times \La$ over each of the $m+1$ copies of $\La$
attached to $\G_{n}$ in the proof of Theorem \ref{t:2.2};
we then identify the centre of $M_{n+1}^0$ with $\Z/d\times \{1\}$ to
amalgamate $M_{n+1}^0$ with $(\Z/d\Z)\times\Z$.
More explicitly, to obtain $M_{n+1}^0$
we  fix a generator $\zeta$ for the central $\Z/d<M_{n}$, then amalgamate $M_n$ with $m+1$ copies of $(\Z/d\Z)\times\La$,
along copies of $(\Z/d\Z)\times\Z$
identifying $\zeta$ with a generator of $(\Z/d\Z)\times\{1\}$ and $\tilde{a}_i$ with the generator of $\core(\La)$,
where $\{\tilde{a}_0,\dots,\tilde{a}_m\}$ are preimages in $M_{n}$ of the generators $a_i\in \G_{n}$ used in
the proof of Theorem \ref{t:2.2}. (An important
point to observe here is that  because  $\<a_i\><\G_{n}$ is infinite cyclic,
its preimage $\<\zeta,\, \tilde{a_i}\>$ in $M_{n}$ is $(\Z/d\Z)\times\Z$.) 
Finally,  we obtain $M_{n+1}$ by amalgamating $M_{n+1}^0$ with $(\Z/d\Z)\times\Z$, again identifying $\zeta$ with
a generator of $(\Z/d\Z)\times\{1\}$. 
By construction, $\<\zeta\>\cong\Z/d\Z$ is central in $M_{n+1}$ with quotient $\G_{n+1}=\G_{n+1}^0\ast\Z$, which is
centreless.
\end{proof}

\section{Successors of limit ordinals}\label{s:6}

\def\wt{\widetilde}
\def\Gt{\widetilde{J}}
\def\Gta{\widetilde{J}_\alpha}

The use of the prime $3$ in the following statement is arbitrary. 

\begin{theorem}\label{t:+1}
For every countable limit ordinal $\alpha=\omega\cdot\gamma$ there exists a finitely generated group $\Gta$ with
$\depth(\Gta) = \alpha +1$ and $\core^\gamma(\Gta)=Z(\Gta)\cong\Z/3$.
\end{theorem}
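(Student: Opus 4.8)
The plan is to combine the wreath-product machinery of Section~\ref{s:3} with the controlled-embedding technique of Proposition~\ref{p:3gen}, being careful to keep a finite central core alive throughout. Concretely, I would first build a \emph{countable} group $J$ with $\depth(J)=\alpha+1$ and $\core^\gamma(J)=Z(J)\cong\Z/3$, and then embed it in a finitely generated group while preserving both the depth and the property that $\core^\gamma$ is a central $\Z/3$.

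\textbf{Step 1: the countable model.} Proposition~\ref{t:cbl} gives a countable group $B$ with $\depth(B)=\alpha=\omega\cdot\gamma$. Take $A=\DL$, the perfect group of (\ref{e:d}) with $d=3$ and $g\ge 3$, which is residually finite modulo its centre $Z\cong\Z/3=\core(A)$. Following the proof of Proposition~\ref{p:omega+1} (and Proposition~\ref{p:beta+1} with $\beta=\omega$, say), form the central extension $E$ of $\overline A\wr B$ obtained from $A\wr B$ by identifying the copies of $Z$ in all the direct factors of $P_{A,B}$ to a single central $Z$. Since $\overline A$ is perfect, residually finite, Corollary~\ref{c:wr} shows $\core^\gamma(E)$ is the preimage of $P_{\overline A,B}$, which is the quotient of $P_{A,B}$ identifying the central summands; its core is exactly $Z$, so $\core^{\gamma+1}(E)=Z\cong\Z/3$ and $\depth(E)=\alpha+\omega+1$. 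This is not quite $\alpha+1$; to fix it, I would instead apply the identification-of-central-$Z$ construction \emph{directly} to a wreath-type model for depth $\alpha$, i.e.\ choose $B$ with $\depth(B)=\alpha$ and such that $\core^\gamma(B)$ is \emph{finite} rather than trivial — no, cleaner: take $B$ with $\depth(B)=\alpha$ exactly as in Proposition~\ref{t:cbl}, and then note that because $\omega+\alpha=\alpha$ we have $\alpha+\omega+1\ne\alpha+1$ in general, so one genuinely needs $\core^\gamma$ of the top group to be finite at the right stage. The correct fix: build $A$ perfect with $\depth(A)=\omega+1$ and $\core^\omega(A)=Z(A)=\Z/3$ finite (e.g.\ $A=\DL\wr F$ centrally collapsed, or more simply just $\DL$ itself has $\depth=\omega+1$ with $\core^\omega(\DL)$... no, $\depth(\DL)=\omega+1$ and $\core^1(\DL)=Z$, so with $\gamma=1$ this is already done). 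So for general $\gamma$, apply Proposition~\ref{p:beta+1} with $\alpha$ replaced by a limit ordinal $\alpha'$ and $\beta+1$ chosen so that $\alpha'+\beta+1=\alpha+1$: take $\beta=0$ is impossible ($\beta+1$ must be a depth of the stated form), so take $A=\DL$ (depth $\omega+1$, central core at stage $1$) and $\alpha'=\omega\cdot\gamma'$ with $\alpha'+\omega+1=\omega\cdot\gamma+1$; this forces $\gamma=\gamma'+1$, handling successor $\gamma$, and for limit $\gamma$ one takes the free product / direct sum of the groups $J_{\omega\cdot\gamma''+1}$ already built, using Lemma~\ref{l:free-prod}(1)–(2) to see that $\core^\gamma$ of the free product is the free product of the central $\Z/3$'s — which is infinite, the wrong answer — so for limit $\gamma$ one must use the \emph{direct sum} of \emph{finitely many}... but $\gamma$ limit forces infinitely many. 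Hence for limit $\gamma$ the right move is again the wreath product: $J = \DL \wr B$ with centre collapsed, where $\depth(B)=\omega\cdot\gamma$, giving $\core^\gamma(J)$ = collapsed $P_{\DL,B}$ with core $Z=\Z/3$.

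\textbf{Step 2: passing to finitely generated.} With the countable $J$ in hand, apply the construction of Proposition~\ref{p:3gen} verbatim: set $J_0=J\ast\Z$, then $J_1$ the HNN extension over the infinite-order elements $a_i=zg_i$, then $\Gt = J_1\ast_{T=L}F_2$, a $3$-generator group. Lemma~\ref{l:free-prod} and Lemma~\ref{l:tree} give $\depth(\Gt)=\depth(J)$ at each stage, using $\omega+\alpha=\alpha$; and since each intermediate core is a free product of a free group with copies of the previous group intersected with the core, and the central $Z\cong\Z/3$ of $J$ is \emph{not} an elliptic element being killed (it survives into $J\ast\Z$ and is unaffected by the HNN and amalgamation steps, lying inside the retracted-onto copy of $J$), one checks $\core^\gamma(\Gt)$ is a free product of a free group and conjugates of $\core^\gamma(J)=Z\cong\Z/3$, hence infinite — \emph{again the wrong answer} for the claim $\core^\gamma(\Gt)=Z(\Gt)\cong\Z/3$. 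The resolution, and the genuinely new point of this theorem, must be to perform the $\ast\Z$, HNN, and $F_2$-amalgamation steps \emph{centrally over} $\Z/3$, exactly as $M_n$ was built as a central extension of $\G_n$ in Theorem~\ref{t3+1}: replace $J\ast\Z$ by the central extension amalgamating $J$ with $(\Z/3)\times\Z$ along $Z$, replace the free HNN/amalgam pieces by their products with $\Z/3$ glued along the central $\Z/3$, so that the final $\Gt$ is a central $\Z/3$-extension of the centreless $3$-generator group $\G^+$ built from $J/Z$ by Proposition~\ref{p:3gen}. Then $\core^\gamma(\Gt)=p^{-1}(\core^\gamma(\G^+))$, and since $\core^\gamma(\G^+)$ is a free product of a free group with copies of $\core^\gamma(J/Z)=1$, it is free, so $\core^\gamma(\Gt)$ is a central $\Z/3$-extension of a free group; one then needs this free group to be \emph{trivial}, which is arranged by running the embedding so that $J/Z$ is already finitely generated before the $F_2$-step, or more robustly by observing that a central extension $1\to\Z/3\to\core^\gamma(\Gt)\to F\to 1$ with $F$ free \emph{splits} (free groups have trivial Schur multiplier and such extensions split), giving $\core^\gamma(\Gt)=(\Z/3)\times F$, whose core is $F$... still wrong. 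So the only way to land $\core^\gamma(\Gt)=\Z/3$ exactly is to ensure $F=1$, i.e.\ to carry out the finite-generation reduction so that no spurious free factors appear in the $\gamma$-th core — which is exactly what the \emph{careful} embedding of Proposition~\ref{p:3gen} does: its cores are free products where the free part comes from the HNN stable letters and the $F_2$, and one arranges (as in Theorem~\ref{t3+1}, where the analogous free part was killed by passing through $\G_1$ residually finite) that these free parts already have trivial image in $\core^\gamma$ because $\gamma\ge 1$ and the stable letters / $F_2$ generators survive in a finite quotient.

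\textbf{Main obstacle.} The crux — and where I expect to spend almost all the work — is bookkeeping the interaction between the central $\Z/3$ and the free-product/HNN decompositions so that $\core^\gamma(\Gt)$ comes out to be \emph{exactly} $\Z/3$ and not $\Z/3$ times (or a central extension of) a nontrivial free group. This is precisely the difficulty that was already handled in Theorem~\ref{t3+1} for the finitely presented case by threading the central extension through copies of the residually finite $\G_1$; here one must adapt that threading to the finitely-generated embedding apparatus of Proposition~\ref{p:3gen}, i.e.\ perform every amalgamation and HNN extension \emph{in the category of central $\Z/3$-extensions} and verify via Bass--Serre theory (Lemma~\ref{l:tree}, Lemma~\ref{l:free-prod}) that the stable letters and the free generators of $F_2$ — together with their central lifts — survive in finite quotients of $\core^{\gamma-1}$, so that they contribute nothing to $\core^\gamma$. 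The rest (that $\depth$ is preserved, using $\omega+\alpha=\alpha$, and that the resulting group is $3$-generated) follows the established pattern.
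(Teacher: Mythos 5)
Your proposal correctly recovers the paper's construction in two places: for $\gamma$ a successor ordinal ($\alpha=\alpha_0+\omega$) you arrive at the centrally collapsed wreath product $E$ of $\DL\wr B$ with $\depth(B)=\alpha_0$, which is exactly what the paper uses; and your insistence on performing the $\ast\,\Z$, HNN and $F_2$ steps of Proposition \ref{p:3gen} in the category of central $\Z/3$-extensions (amalgamating each new piece with $(\Z/3)\times(\cdot)$ along the centre) is precisely how the paper upgrades to finite generation. But there is a genuine gap when $\gamma$ is a \emph{limit} ordinal. Your final answer there --- $J=\DL\wr B$ centrally collapsed with $\depth(B)=\omega\cdot\gamma$ --- has $\core^\gamma(J)$ equal to the collapsed copy of $P_{\DL,B}$, an \emph{infinite} group whose core is $\Z/3$; hence $\depth(J)=\alpha+\omega+1\neq\alpha+1$, which is the very defect you flagged at the start of Step 1 before circling back to the same construction. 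The missing idea is to use the groups already produced by the transfinite induction: form the amalgamated free product $\widetilde{\G}^\alpha$ of the groups $\widetilde{J}_\beta$, over all limit ordinals $\beta<\alpha$, identifying all of their central copies of $\Z/3$. This is a central $\Z/3$-extension of $\bigast_\beta(\widetilde{J}_\beta/Z)$, whose depth is $\sup_\beta\beta=\alpha$ by Lemma \ref{l:free-prod}(3); the retractions $\widetilde{\G}^\alpha\to\widetilde{J}_\beta$ show that the centre lies in arbitrarily deep cores, hence in $\core^\gamma(\widetilde{\G}^\alpha)$, forcing $\core^\gamma(\widetilde{\G}^\alpha)=\Z/3$ exactly. (Amalgamating over the common centre, rather than taking a plain free product, is what avoids the infinite free product of $\Z/3$'s that you correctly rejected.)

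Second, the ``main obstacle'' you describe in Step 2 --- the fear that $\core^\gamma$ of the finitely generated group is a central extension of a nontrivial free group --- dissolves once the core exponents are kept straight. The spurious free factors enter at the level of $\core^1$ of each intermediate group and are annihilated one step later, because free groups are residually finite; what Proposition \ref{p:3gen} actually delivers for the centreless quotient $\G^+$ is $\depth(\G^+)=\omega\cdot\gamma$, i.e.\ $\core^\gamma(\G^+)=1$ by Lemma \ref{l:cores-to-depth}, not ``a free group'' (you are conflating $\core^1$ with $\core^\gamma$; note $\gamma\ge\omega$ here since $\alpha\ge\omega^2$). Hence $\core^\gamma(\widetilde{J}_\alpha)\le p^{-1}(1)=\<\zeta\>$, and the reverse inclusion holds because $\zeta$ already lies in $\core^\gamma$ of the subgroup $\widetilde{\G}^\alpha$. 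No further ``threading through residually finite pieces'' is needed beyond the central-extension bookkeeping you describe. In summary: Step 2 is essentially correct but stops short of an easy verification, while Step 1 is incorrect for limit $\gamma$ and needs the amalgamation-over-the-centre construction.
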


\begin{proof}
We again proceed by transfinite induction.  
The limit ordinals $\alpha <\omega^2$ have the form $\omega\cdot n$, 
and for these we can take the groups $M_n$ constructed in Theorem \ref{t3+1}.

Thus we may assume that $\alpha\ge \omega^2$ and we may assume that $\Gt_\beta$ has
been constructed for limit ordinals $\beta < \alpha$. We shall again exploit the fact that
$\alpha\ge \omega^2$ implies $\omega + \alpha=\alpha$ and hence $\core^\alpha(G)= \core^\alpha(\core(G))$
for all countable groups $G$.

If $\alpha = \alpha_0+\omega$, we take a  
finitely generated group with $\depth(B)=\alpha_0$ and consider the quotient $E$ of $\DL\wr B$
constructed in our proof of Proposition \ref{p:omega+1}. This group has centre $\Z/3$ and
$\depth(E)=\alpha+1$.

If $\alpha$ is not of the form $\alpha_0+\omega$ then $\alpha$ is the supremum of 
the limit ordinals  $\beta<\alpha$ and we define $\wt{\G}^\alpha$ to be the amalgamated free product of
the groups $\Gt_\beta$ where the amalgamation identifies all of the centres $Z(\Gt_\beta)\cong\Z/3$.
Let $J_\beta$ be $\Gt_\beta$ modulo its centre and note that $\depth(J_\beta)=\beta$. We have a central extension
\[
1\to \Z/3\to \wt{\G}^\alpha \to \bigast_{\beta} J_\beta\to 1.
\]
For each $\beta$, the natural retraction   $\wt{\G}^\alpha\to\Gt_\beta$ maps $Z(\wt{\G}^\alpha)$ onto $Z(\Gt_\beta)$, hence
$Z(\wt{\G}^\alpha)\subseteq\core^{\beta}(\wt{\G}^{\alpha})$ for all $\beta<\alpha$,
 therefore $Z(\wt{\G}^\alpha)\subseteq \core^{\alpha}(\wt{\G})$.
On the other hand, $\wt{\G}^\alpha/Z(\wt{\G}^\alpha)$  is the free product of the $J_\beta$
and $\depth(J_\beta)=\beta$, so Lemma \ref{l:free-prod} tells us that the depth of $\wt{\G}^\alpha/Z(\wt{\G}^\alpha)$ 
is $\alpha=\omega\cdot\gamma$. 
Thus  $\core^\alpha(\wt{\G}^\gamma) = Z(\wt{\G}^\alpha) \cong  \Z/3$.  At this point, we have proved the weaker form of the
theorem with ``countable" in place of ``finitely generated" and the remainder of the proof simply involves adapting
the proof of Proposition \ref{p:3gen} so as to upgrade to finite generation. 

To this end, we let $\wt{\G}_0 = \wt{\G}^\alpha\ast_{\Z/3}((\Z/3)\times\Z)$, where the amalgamation 
identifies  $Z(\wt{\G}^\alpha)$ with $(\Z/3)\times 1$, and we fix  generators $\{\zeta, a_0, a_1\dots\}$ for $\wt{\G}_0$ where
$\zeta$ generates $Z(\wt{\G}^\alpha)$ and the $a_i$ have infinite order. We then take the HNN extension  
with infinitely many stable letters $t_i$, where $t_i$ conjugates $a_i$ to $a_0$ and fixes $\zeta$:
\[
\wt{\G}_1 = (\wt{\G}_0, t_1, t_2,\dots \mid t_i^{-1}a_it_i=a_0,\, [t_i,\zeta]=1\  \forall i).
\]
Note that $Z(\wt{\G}_1) = \<\zeta\>\cong\Z/3$, and $\wt{\G}_1$ modulo its centre is the group $\G_1$
we had constructed at this stage of the proof of Proposition \ref{p:3gen}.
Finally, we define $\Gta$ by taking a free group $F$ of rank $2$ and
amalgamating $\wt{\G}_1$ with $(\Z/3)\times F$ by
identifying $\<\zeta\>$ with $(\Z/3)\times 1$ and identifying an infinitely generated subgroup of 
$F$ with the free subgroup $\<t_1,t_2,\dots\>$.
By construction, $Z(\Gta) = \<\zeta\>\cong\Z/3$ and $\Gta/Z(\Gta)$
is the group $\G^+$ we had in proof of Proposition \ref{p:3gen}. In particular, 
$\depth(\Gta/Z(\Gta))=\alpha$. By construction, $\zeta\in\core^{\alpha}(\wt{\G}^\alpha)$, so
$\core(\Gta)=\<\zeta\>\cong\Z/3$ and $\depth(\Gta)= \alpha + 1$. This completes the induction. 
\end{proof}

A new idea is needed to answer the following question.

\begin{question} Which countable ordinals arise as the residual finiteness depths of finitely {\em presented} groups?
Might all limit ordinals less than $\omega^\omega$ and their successors arise in this way, for example?
\end{question}

%------
% Insert acknowledgments and information
% regarding funding at the end of the last
% section, i.e., right before the bibliography.
%------

\bigskip
\noindent{\textbf{Acknowledgements.}} I am grateful to the anonymous referees for their careful reading and insightful comments.    

%\begin{funding}
%This work was partially supported by~\ldots
%\end{funding}

%------
% Insert the bibliography.
%------

\end{document}